\documentclass[journal]{IEEEtran}

\usepackage{amsfonts}
\usepackage{graphicx}
\usepackage{amssymb}
\usepackage{epstopdf}
\usepackage{amsmath,amsthm}
\usepackage{enumerate}
\usepackage{eufrak}
\usepackage{cite}
\usepackage{mathcomp}
\usepackage{supertabular}
\usepackage{longtable}
\usepackage{stmaryrd}
\usepackage{url}
\usepackage{cases}

\usepackage{color}
\usepackage{rotating}
\usepackage{float}

\newtheorem{theorem}{Theorem}
\newtheorem{lemma}{Lemma}
\newtheorem{proposition}{Proposition}

\theoremstyle{definition}
\newtheorem{definition}{Definition}
\newtheorem{example}{Example}

\newtheorem{conjecture}{Conjecture}

\theoremstyle{remark}

\newcommand{\A}{\mathcal A}
\newcommand{\B}{\mathcal B}
\newcommand{\C}{\mathcal C}
\newcommand{\G}{\mathcal G}
\newcommand{\vu}{\sf u}
\newcommand{\pr}{{\rm Prob}}
\newcommand{\bz}{\mathbb{Z}}
\newcommand{\gf}{\mathbb{F}}

\begin{document}

\sloppy

\title{Complexity of Dependencies in Bounded Domains,
Armstrong Codes, and Generalizations}

 \author{
   \IEEEauthorblockN{Yeow Meng Chee, Hui Zhang, and Xiande Zhang}
   \thanks{Research of the authors was supported in part by the Singapore
National Research Foundation under Research Grant NRF-
CRP2-2007-03.}
   \thanks{Yeow Meng Chee, Hui Zhang and Xiande Zhang are with the School of Physical and Mathematical Sciences, Nanyang Technological
   University, 21 Nanyang Link, Singapore 637371,
   Singapore (email: \{ymchee, huizhang, xiandezhang\}@ntu.edu.sg).}
\thanks{This paper was presented
in part at the 2013 IEEE International Symposium on Information
Theory \cite{CheeZhang:2013}.} 
\thanks{Copyright (c) 2014 IEEE. Personal use of this material is permitted.  However, permission to use this material for any other purposes must be obtained from the IEEE by sending a request to pubs-permissions@ieee.org.}} 

\maketitle

\begin{abstract}
The study of Armstrong codes is motivated by the problem of
understanding complexities of dependencies in relational database
systems, where attributes have bounded domains. A
$(q,k,n)$-Armstrong code is a $q$-ary code of length $n$ with
minimum Hamming distance $n-k+1$, and for any set of $k-1$
coordinates there exist two codewords that agree exactly there. Let
$f(q,k)$ be the maximum $n$ for which such a code exists. In this
paper, $f(q,3)=3q-1$ is determined for all $q\geq 5$ with three
possible exceptions. This disproves a conjecture of Sali. Further,
we introduce generalized Armstrong codes for branching, or
$(s,t)$-dependencies, construct several classes of optimal Armstrong
codes and establish lower bounds for the maximum length $n$ in this
more general setting.
\end{abstract}
\begin{IEEEkeywords}
\boldmath relational database, Armstrong codes, functional
dependency, extorthogonal double covers
\end{IEEEkeywords}

\section{Introduction}
Let $A$ be a set of $n$ {\em attributes}. Each attribute $x\in A$ is
associated with a set $\Omega_x$, called its {\em domain}. A {\em relation}
is a finite set $R$ of $n$-tuples (called {\em data items}), such that
$R\subseteq \times_{x\in A}\Omega_x$.
A relational database {\em table} is an $m\times n$ array where each column
is indexed by an  attribute and each row corresponds to a data item in $R$.
We denote this table by $R(A)$.
More specifically, if $R=\{(d_{i,x})_{x\in A}: 1\leq i\leq m\}$, then the cell in
$R(A)$ with row index $i$ and column index $x$ has entry $d_{i,x}$.
A {\em relational database} is a set
of tables, where different tables may be defined over different
attribute sets.

For a given table $R(A)$ and $X\subseteq A$, the {\em $X$-value} of
a data item $d=(d_x)_{x\in A}$ in $R(A)$ is the $|X|$-tuple
$d\!\mid_X=(d_x)_{x\in X}$. Let $X\subseteq A$ and $y\in A$ for a
given table $R(A)$. We say that $y$ {\em (functionally) depends} on
$X$, written $X\rightarrow y$, if no two rows of $R(A)$ agree in $X$
but differ in $y$. In other words, if the $X$-value of a data item
is known, then its $\{y\}$-value can be determined with certainty. A
{\em key} for $R(A)$ is a subset $K\subseteq A$, such that
$K\rightarrow b$ for all $b\in A$. A key $K$ is called {\em minimal}
if no subset of $K$ is a key.

Identifying functional dependencies, especially key dependencies, is
important in relational database design
\cite{Armstrong:1974,Bernstein:1976,Beerietal:1977,Rissanen:1977}.
From the schema design point of view, the question of whether a
given collection $\Sigma$ of functional dependencies has an {\em
Armstrong instance} for $\Sigma$, that is, a table that satisfies a
functional dependency $X\rightarrow y$ if and only if $X\rightarrow
y$ is implied by $\Sigma$, is well studied. The existence of an Armstrong
instance for any given set of functional dependencies was proved by
Armstrong \cite{Armstrong:1974} and Demetrovics
\cite{Demetrovics:1979}. Further investigations (see for example,
\cite{Demetrovicsetal:1992}) concentrated on the minimum size of an
Armstrong instance, since it is a good measure of the complexity of
a set of functional dependencies, or a set of minimal keys.


Earlier work on Armstrong instances were mostly studied by
assuming that the domain of each attribute is countably
infinite. Recently,   the study of {\em higher order data model} in
\cite{Hartmannetal:2004,Sali:2004} considered the question of Armstrong
instances with bounded domains.  Another reason for considering
bounded domains is that for many attributes, their domains are
well defined finite sets. For example, the
age of a person can take values from the set
$\{0,1,\ldots,130\}$.

Thalheim \cite{Thalheim:1992} investigated the
maximum number of minimal keys in the case of bounded domains and
showed that restrictions on the sizes of domains make
significant differences. It is natural to ask what one can say about
Armstrong instances if all attributes have domains restricted to size
$q$. Let $K_n^k$ denote
the collection of all $k$-subsets of an $n$-element attribute set
$A$.

\begin{definition}\label{fqk} Let $q,k > 1$ be integers.
Let $f(q, k)$ denote the maximum $n$ such that there exists an Armstrong
instance for $K_n^k$ being the system of minimal keys.
\end{definition}

The problem of determining $f(q,k)$ was introduced in
\cite{SaliSchewe:2008} and investigated in
\cite{Katonaetal:2008,SaliSzekely:2008}. The only known values of
$f(q, k)$ are $ f(q, 2) =\binom{q+1}{2}$, which were determined in
\cite{Katonaetal:2008}.

One of the main contributions of this paper is the determination of
$f(q,3)$. We prove that $f(q,3)=3q-1$ for all $q\geq 5$, except
possibly for $q\in \{14,16,20\}$. This disproves a conjecture of
Sali \cite{Sali:2011}.

When functional dependencies
are not known, the concept was
generalized to $(s,t)$-dependencies to improve
storage efficiency
\cite{Demetrovicsetal:1992,Demetrovicsetal:1995,Demetrovicsetal:1998,KatonaSali:2004}. In this paper, we introduce the analogous problem of
determining $f(q,k)$ when extended to $(s,t)$-dependencies, that is the function $f_{s,t}(q,k)$ (see Section IV for detailed definition). We show that
$f_{1,t}(q,2)=\binom{qt+1}{t+1}$, $f_{2,2}(q,4)=2q-1$ and establish
several lower bounds of $f_{1,t}(q,k)$ by constructive method and
probabilistic method.

\section{Preliminaries}
Throughout this paper, we view an $m\times n$ Armstrong instance with domains of size $q$
as a $q$-ary code $C$ of length $n$ and size $m$, where the
codewords are precisely the rows of the instance.

For a positive integer $k$, $[k]$ denotes the set
of integers $\{1,2,\ldots, k\}$ and $\bz_k$ denotes the ring of integers modulo $k$. For any subset $S\subset \bz_k$, let $aS\triangleq\{as:s\in S\}$ and $a+S\triangleq\{a+s:s\in S\}$.

\subsection{Armstrong Codes}

Katona et al. \cite{Katonaetal:2008} characterized the $q$-ary code
$C$ corresponding to an Armstrong instance with $K_n^k$ as the set
of minimal keys, as follows:

\begin{enumerate}[(i)]
\item $C$ has minimum Hamming distance at least $n - k + 1$;
\item for
any set of $k - 1$ coordinates there exist two codewords agreeing in
exactly those coordinates.
\end{enumerate}

A $(k-1)$-set of coordinates can be considered as a ``direction'', so
in $C$ the minimum distance is {\em attained in all directions}.
Such a code $C$ is called an {\em Armstrong code},
or more precisely, a {\em $(q, k, n)$-Armstrong code}. It is obvious that
$f(q,k)$ is the maximum $n$ such that there exists a $(q,
k, n)$-Armstrong code. The following bounds on $f(q,k)$ are known.

\begin{theorem}[Blokhuis et al. \cite{BBS2014}, Katona et al. \cite{Katonaetal:2008}, Sali and Sz\'ekely \cite{SaliSzekely:2008}]
\hfill
\begin{enumerate}[(i)]
\item Let $q>4$. Then $f(q,k)\geq \left\lceil\frac{k}{2} \log q-1\right\rceil$ for all
sufficiently large $k$.
\item  $f(2,k)\geq k+3$ for all $k\geq7$. Further, there exists a constant $c>1$ such that $f(2,k)\geq \lfloor ck\rfloor$ for all
sufficiently large $k$.
\item Let $q > 1$ and $k > 2$.
Then
\begin{equation}
f(q, k) \leq
q(k-1)\left(1+\frac{q-1}{\sqrt{\frac{2(qk-q-k+2)^{k-1}}{(k-1)!}}-q}\right).
\label{ub}
\end{equation}
\item If $q\geq 2$ and $k\geq 5$, then the bound (\ref{ub})
can be improved to $f(q, k) \leq  q(k -1)$, except when
$(k, q) \in\{ (5, 2), (5, 3), (5, 4), (5, 5), (6,
2)\}$.
\item For fixed $q>1$, we have
\begin{equation*}
\frac{\sqrt{q}}{e}k<f(q,k)<(q-\log q)k
\end{equation*}
for all sufficiently large $k$.
\end{enumerate}
\end{theorem}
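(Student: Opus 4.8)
The statement aggregates lower bounds (existence results: parts (i), (ii), and the left inequality of (v)) and upper bounds (nonexistence results: parts (iii), (iv), and the right inequality of (v)), and I would prove the two families by entirely different techniques.

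For the upper bounds the engine is a double count of agreeing pairs. Fix a $(q,k,n)$-Armstrong code $C$ with $m$ codewords, assume $m>q$ (otherwise the covering property below bounds $n$ trivially), and for each coordinate $j$ and symbol $a$ let $m_{j,a}$ be the number of codewords carrying $a$ in position $j$. Counting over all coordinates the ordered pairs of distinct codewords that agree there gives $\sum_{j}\sum_{a} m_{j,a}(m_{j,a}-1)$. The minimum-distance hypothesis forces every pair to agree in at most $k-1$ coordinates, so this total is at most $(k-1)m(m-1)$, while Cauchy--Schwarz in each coordinate gives $\sum_a m_{j,a}(m_{j,a}-1)\ge m(m-q)/q$ and hence a lower bound $nm(m-q)/q$. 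Rearranging yields
\begin{equation*}
n\le q(k-1)\,\frac{m-1}{m-q}=q(k-1)\left(1+\frac{q-1}{m-q}\right).
\end{equation*}
The covering property supplies the missing lower bound on $m$: each $(k-1)$-subset of coordinates is the exact agreement set of some pair, such a pair sits at minimum distance and covers only that subset, so $\binom{m}{2}\ge\binom{n}{k-1}$. To resolve the apparent circularity (the right-hand side contains $n$) I would split on whether $n>q(k-1)$. If it is, then $n-k+2>qk-q-k+2$, so $\binom{m}{2}\ge\binom{n}{k-1}\ge (n-k+2)^{k-1}/(k-1)!>(qk-q-k+2)^{k-1}/(k-1)!$, forcing $m>M$ with $M=\sqrt{2(qk-q-k+2)^{k-1}/(k-1)!}$; substituting $m>M$ into the displayed inequality gives exactly (\ref{ub}). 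If instead $n\le q(k-1)$, then (\ref{ub}) holds trivially because its bracketed factor is at least $1$. For part (iv) I would observe that when $k\ge 5$ the binomial $\binom{n}{k-1}$, and hence $m$, grows fast enough that $q(k-1)(q-1)/(m-q)<1$; integrality of $n$ then upgrades (\ref{ub}) to $n\le q(k-1)$, the listed pairs $(k,q)$ being precisely the small cases where this margin fails. The right inequality of (v) would come from pushing the same estimate asymptotically, sharpening the coordinatewise Cauchy--Schwarz to account for the $\binom{n}{k-1}$ forced minimum-distance pairs so as to shave the factor down to $q-\log q$.

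For the lower bounds I would argue by explicit construction and by the probabilistic method. The left inequality of (v), $f(q,k)>\frac{\sqrt q}{e}k$, is naturally delivered by the Lov\'asz Local Lemma: take a random $q$-ary array, declare a bad event whenever a prescribed pair of rows fails to realize its intended agreement pattern or whenever some pair violates the distance bound, bound the dependency degree of these events in terms of $n$, $k$, and $q$, and verify the local-lemma condition for all $n$ up to $\frac{\sqrt q}{e}k$. Part (i) I would obtain from an explicit algebraic or concatenation construction over $\mathbb{F}_q$ yielding length logarithmic in $k$, and the binary claims in (ii) from a direct combinatorial construction (the bound $k+3$ by hand) together with a probabilistic argument giving the linear rate $\lfloor ck\rfloor$.

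The main obstacle I anticipate is the probabilistic lower bound behind (v): controlling the interaction between the ``uncovered direction'' events and the ``too-close pair'' events precisely enough to extract the constant $\sqrt q/e$ is delicate, since a naive union bound is far too lossy and the local-lemma verification is sensitive to how the bad events are grouped. On the upper-bound side, the conceptual content is entirely in the two-line double count; the real labor lies in the integrality and small-case bookkeeping needed to upgrade (\ref{ub}) to the clean $q(k-1)$ in (iv) and to the asymptotic $(q-\log q)k$ in (v).
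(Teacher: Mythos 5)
First, a structural point: this theorem is background in the paper, quoted with attributions to Blokhuis--Brouwer--Sali, Katona--Sali--Schewe, and Sali--Sz\'ekely; the paper itself contains no proof of any part of it, so there is no internal argument to compare yours against, and your attempt has to be judged against the cited literature. On that basis, your reconstruction of parts (iii) and (iv) is essentially the standard argument and is correct in outline: count agreeing pairs coordinatewise, use convexity to get $n\,m(m-q)/q\le (k-1)m(m-1)$, hence $n\le q(k-1)\bigl(1+\tfrac{q-1}{m-q}\bigr)$, and feed in $\binom{m}{2}\ge\binom{n}{k-1}$, which is valid because a pair whose exact agreement set is $K$ determines $K$, so distinct $(k-1)$-sets yield distinct pairs. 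Your resolution of the apparent circularity (split on whether $n>q(k-1)$, the bound being vacuous otherwise) works, as does deducing (iv) from integrality once the excess term drops below $1$ --- though the real content of (iv) is the case analysis verifying this for all $(q,k)$ with $k\ge 5$ outside the five listed pairs, which you defer to ``bookkeeping''. Your identification of the Lov\'asz Local Lemma as the engine for the lower bound in (v) also matches the literature, and indeed Section V of this paper runs exactly that machinery (planted pairs of codewords for each $(k-1)$-set, bad events for codewords agreeing in at least $k$ positions, dependency degree of order $(t+1)^2\binom{n}{k-1}$) to prove its generalization, Proposition~\ref{probresult}; the bound $\tfrac{\sqrt q}{e}k$ is the $t=1$ case. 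One correction there: in the planted construction the covering property holds deterministically by construction, so only the distance events are needed, not the ``fails to realize its intended agreement pattern'' events you list.

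The genuine gaps are the parts you only gesture at. Part (i) you misread: $\lceil\tfrac k2\log q-1\rceil$ grows linearly in $k$ (with slope $\tfrac12\log q$), not logarithmically, so an ``algebraic or concatenation construction yielding length logarithmic in $k$'' could not possibly establish it; as described, your route targets the wrong quantity. Part (ii) --- the bounds $f(2,k)\ge k+3$ for $k\ge 7$ and $f(2,k)\ge\lfloor ck\rfloor$ --- is the main content of the Blokhuis--Brouwer--Sali note and requires actual constructions and a separate argument for the constant $c>1$; ``by hand'' and ``a probabilistic argument'' supply neither. Finally, for the upper bound $(q-\log q)k$ in (v), ``sharpening the coordinatewise Cauchy--Schwarz'' is not an argument: the coordinatewise convexity estimate saturates at $q(k-1)$, and extracting the additional $\log q$ saving is precisely the nontrivial part of Sali--Sz\'ekely's proof, which your sketch does not reach. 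In summary, (iii) and (iv) are correctly reconstructed, the lower bound of (v) names the right method, and (i), (ii), and the upper bound of (v) remain unproved.
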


\vskip 5pt

\begin{proposition}[Katona et al. \cite{Katonaetal:2008}]  \label{3q-1ub}
For $q>1$, $f(q, 3)\leq 3q -1$.
\end{proposition}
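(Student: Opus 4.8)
The plan is to work directly with the coding-theoretic reformulation of conditions (i) and (ii). Let $C$ be a $(q,3,n)$-Armstrong code. Condition (i) says that any two codewords of $C$ agree in at most $2$ coordinates, while condition (ii) (with $k-1=2$) says that every pair $\{i,j\}$ of coordinates is the \emph{exact} agreement set of some pair of codewords. Combining the two, condition (ii) becomes equivalent to the statement that for every pair of coordinates $\{i,j\}$ the projection $w\mapsto(w_i,w_j)$ from $C$ to $[q]\times[q]$ is non-injective: a collision in coordinates $i$ and $j$ must, by (i), be a collision in exactly those two coordinates. Establishing this reformulation first is worthwhile, as it converts the metric condition into a transparent combinatorial one.

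Next I would set up a double count of coordinate-agreements. For a coordinate $i$ and a symbol $c$, let $s_{i,c}=|\{w\in C: w_i=c\}|$ be the size of the corresponding slice. On one hand, every one of the $\binom{n}{2}$ coordinate-pairs is covered by at least one codeword pair agreeing in exactly two coordinates, so the number $P_2$ of codeword pairs whose agreement set has size $2$ satisfies $P_2\ge\binom{n}{2}$. On the other hand, counting incidences (codeword pair, common coordinate) gives
\begin{equation*}
2P_2 \le \sum_{i=1}^{n}\sum_{c=1}^{q}\binom{s_{i,c}}{2},
\end{equation*}
so that $n(n-1)\le\sum_{i,c}\binom{s_{i,c}}{2}$. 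The slice structure controls the summands: fixing $i$ and puncturing it, the slice $\{w\in C: w_i=c\}$ becomes a length-$(n-1)$ code of minimum distance at least $n-2$, so any two of its codewords agree in at most one further coordinate, and each double slice $\{w\in C: w_i=c,\ w_j=b\}$ contains at most $q$ codewords. I would use these facts to bound the individual slice sizes and, for each coordinate $i$, the number of coordinates $j$ that can be witnessed by a collision carrying a fixed symbol $c$ at $i$.

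The decisive and most delicate step is to convert these local packing bounds into the sharp linear bound $n\le 3q-1$. The naive estimate above is too weak by itself, since the Singleton-type inequality $|C|\le q^{3}$ (obtained from the double slices) allows $\sum_{i,c}\binom{s_{i,c}}{2}$ to far exceed $n(n-1)$, so the inequality holds with room to spare and yields no contradiction. I therefore expect the main obstacle to be a finer analysis limiting how efficiently collisions can be reused across coordinate-pairs: controlling, for each coordinate $i$, the distribution over $c\in[q]$ of the symbols attached to the witnessing collisions as $j$ ranges over the remaining $n-1$ coordinates, and exploiting that distinct coordinate-pairs must be witnessed by genuinely distinct collisions lying inside slices of bounded pairwise agreement. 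Pinning down the exact constant $3$, rather than a weaker linear bound, is where the tightness of the extremal configuration—and hence the real difficulty—resides.
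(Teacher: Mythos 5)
Your proposal is not a complete proof: you set up the double count, correctly observe that the inequality you obtain, $n(n-1)\le\sum_{i,c}\binom{s_{i,c}}{2}$, points the wrong way to bound $n$, and then explicitly leave the ``decisive and most delicate step'' as a hoped-for finer analysis of how collisions are reused across coordinate pairs. That step is where the entire content of the proposition lies, and the actual mechanism (due to Katona et al.\ \cite{Katonaetal:2008}, and reproduced in generalized form in Propositions~\ref{condition1}, \ref{condition2}, \ref{decreasing} and~\ref{1v} of this paper) is different from what you sketch. Two ingredients are missing. First, the \emph{complementary} upper bound: since the minimum distance forces any two of the $m$ codewords to agree in at most two coordinates, the total number of equal-entry pairs satisfies $\sum_{i=1}^{n}\sum_{c=1}^{q}\binom{s_{i,c}}{2}\le 2\binom{m}{2}$. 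Second, a per-column lower bound by convexity: writing $m=qh+r$ with $0\le r<q$, each column partitions the $m$ codewords into $q$ symbol classes, so $\sum_{c}\binom{s_{i,c}}{2}\ge \phi(m)$, where $\phi(m)=q\binom{h}{2}+rh$ is the value attained by the balanced partition. Together these give $n\,\phi(m)\le m(m-1)$, i.e.\ $n\le m(m-1)/\phi(m)$, and this bound is \emph{decreasing} in $m$ (this is exactly Proposition~\ref{decreasing} with $k=3$, $t=1$). On the other hand, your observation $P_2\ge\binom{n}{2}$, combined with $P_2\le\binom{m}{2}$ (each codeword pair witnesses at most one coordinate pair, again by the distance condition), gives $n\le m$, which is \emph{increasing} in $m$. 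Since $m$ is a free parameter, one plays the two bounds against each other: if $m\le 3q-1$ then $n\le m\le 3q-1$; if $m\ge 3q$ then $h\ge 3$, $\phi(3q)=q\binom{3}{2}=3q$, and monotonicity gives $n\le 3q(3q-1)/(3q)=3q-1$. Hence $n\le 3q-1$ in every case, with the two bounds crossing exactly at $m=n=3q-1$.

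The concrete gap, then, is that you never bring in the balancedness bound $\phi(m)$ nor the idea of an increasing bound and a decreasing bound in $m$ meeting at the extremal value; these, and not a refined study of collision reuse between coordinate pairs, are what pin down the constant $3$. Your local packing facts (punctured slices have distance $n-2$, double slices have size at most $q$) are true but play no role in the actual argument and do not by themselves approach the sharp bound. Note also that the paper does not prove this proposition itself---it is quoted from \cite{Katonaetal:2008}---but its Section on generalized Armstrong codes carries out precisely this two-inequality scheme in the $(s,t)$-setting, so the intended method is visible there; your write-up would become a proof if you replaced the speculative final step with the $\phi(m)$ bound and the crossing-point computation above.
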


\subsection{Orthogonal Double Covers}

The concept of orthogonal double covers originates in  conjectures
of Demetrovics et al. \cite{Demetrovicsetal:1985} concerning database
constraints and was formalized later by Ganter et al.
\cite{Ganteretal:1994}. Let $X$ be a finite set.
A partition of $X$ is said to {\em cover}
$T\subseteq X$ if $T$ is contained in some part of the partition.
Let $K_m$ denote a complete graph  on $m$ vertices.
For convenience, let $a_1K_{m_1}\cup \cdots \cup a_sK_{m_s}$ denote the disjoint union of $a_i$ copies of $K_{m_i}$, $1\leq i\leq s$.

\begin{definition} Let $X$ be a set of size $m$.
A set of partitions of $X$ is called an {\em orthogonal double
cover} (ODC) of $K_m$ (with its vertices identified with elements of
$X$) if it satisfies the following properties:
\begin{enumerate}[(i)]
\item for any two partitions, there is exactly one 2-subset of $X$
that is covered by both partitions;
\item each 2-subset of $X$ is
covered by exactly two different partitions.
\end{enumerate}
\end{definition}

A construction of $(q,3,n)$-Armstrong codes from ODC's was introduced by Sali in \cite{Sali:2011}. View each part of a partition of $X$ as a complete subgraph of
$K_m$ over $X$. Then each partition can be regarded as a disjoint
union of complete subgraphs of $K_m$. Note that a part of size one corresponds to a complete subgraph consisting of only one isolated vertex. If an ODC consists of $n$
partitions, each of which is isomorphic to a graph $G$, then we say
the ODC is an {\em ODC by $n$ $G$'s}. Suppose that $G$ is a disjoint
union of $q$ complete subgraphs, then an ODC of $K_m$ by $n$ $G$'s
gives an $m\times n$ Armstrong instance over $[q]$ as follows. For
each partition of the ODC, arbitrarily order the $q$ parts, and
construct a column $\vu$ of length $m$, with coordinates indexed by
elements of $X$, such that for $i\in X$, ${\vu}_i=j$ if and only if
$i$ is contained in the $j$-th part of the partition. It is easy to
check that the set of rows of this Armstrong instance is a
$(q,3,n)$-Armstrong code.

\begin{example}\label{k7}
In \cite{Demetrovicsetal:1985}, there is an ODC of $K_7$ by
seven $2K_3\cup K_1$'s over $\bz_7$ with each partition $P_i$, $i\in \bz_7$ consisting of three parts $\{i\}$, $i+\{1,2,4\}$ and $i+\{ 3 , 5 , 6 \}$. Then a $(3,3,7)$-Armstrong code is
constructed as below.

$$\begin{array}{ccccccc}
3 & 2 & 2 & 1 & 2 & 1 & 1 \\
1 & 3 & 2 & 2 & 1 & 2 & 1 \\
1 & 1 & 3 & 2 & 2 & 1 & 2 \\
2 & 1 & 1 & 3 & 2 & 2 & 1 \\
1 & 2 & 1 & 1 & 3 & 2 & 2 \\
2 & 1 & 2 & 1 & 1 & 3 & 2 \\
2 & 2 & 1 & 2 & 1 & 1 & 3 \\
\end{array}$$
\end{example}

Ganter and Gronau \cite{GanterGronau:1991} proved that for $q\geq 5$,
there exists an
ODC of $K_{3q-2}$ by $3q-2$ $(q-1)K_3\cup K_1$'s, settling a conjecture
of Demetrovics et al. \cite{Demetrovicsetal:1985}. This result also implies the
existence of a $(q,3,3q-2)$-Armstrong code. Hence, we have $f(q,3)\geq 3q-2$.
Furthermore, it is easy to show that $f(2,3)=4$.
This led Sali \cite{Sali:2011} to make the following conjecture.

\begin{conjecture}[Sali \cite{Sali:2011}]
\label{wrong} For all $q\geq 2$, $f(q, 3) = 3q -2$.
\end{conjecture}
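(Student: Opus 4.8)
The abstract already signals that Conjecture~\ref{wrong} is \emph{false}: the correct value is $f(q,3)=3q-1$ for $q\ge5$. So rather than prove the conjecture, the plan is to disprove it by establishing the equality $f(q,3)=3q-1$, where the crux is the lower bound. Proposition~\ref{3q-1ub} already gives $f(q,3)\le 3q-1$ for every $q>1$, so it suffices to exhibit, for each $q\ge5$ (outside a small exceptional set), a $(q,3,3q-1)$-Armstrong code; this forces $f(q,3)\ge 3q-1$ and hence equality. The gap to close is exactly one over the Ganter--Gronau bound $f(q,3)\ge 3q-2$.

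Following the ODC-to-code correspondence recalled above, I would seek such a code as an ODC of $K_{3q-1}$ by $3q-1$ copies of a graph $G$ that is a disjoint union of $q$ complete subgraphs. A double count pins down the shape of $G$: an ODC of $K_m$ consists of exactly $m$ partitions, each covering exactly $m-1$ edges, so here each $G$ must cover $3q-2$ edges on $3q-1$ vertices using $q$ parts. Writing the part sizes as $s_1,\dots,s_q$, the constraints $\sum s_i=3q-1$ and $\sum\binom{s_i}{2}=3q-2$ (equivalently $\sum s_i^2=9q-5$) force $(s_1,\dots,s_q)=(3,\dots,3,2)$, i.e.\ $G=(q-1)K_3\cup K_2$. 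Thus the entire problem reduces to: \emph{for every $q\ge5$, construct an ODC of $K_{3q-1}$ by $3q-1$ copies of $(q-1)K_3\cup K_2$.}

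To build these ODCs I would use a cyclic starter--translate construction over $\bz_{3q-1}$, exactly as in Example~\ref{k7} and the Ganter--Gronau result. One fixes a single base partition $P_0$ of $\bz_{3q-1}$ into $q-1$ triples and one pair and takes $P_i=i+P_0$ for $i\in\bz_{3q-1}$; since translation preserves the graph shape, all $3q-1$ partitions are copies of $(q-1)K_3\cup K_2$. Viewing $P_0$ as a graph $G_0$ on $\bz_{3q-1}$, the ODC axioms become difference conditions on $G_0$: axiom (ii) becomes the requirement that the $2(3q-2)$ signed edge-differences of $G_0$ cover each nonzero element of $\bz_{3q-1}$ exactly twice, while axiom (i) becomes the requirement that $G_0$ and its translate $G_0+t$ share exactly one edge for every nonzero $t$. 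The natural strategy is to split into residue classes of $q$ modulo a small integer and write down explicit base triples (together with the extra pair) whose edge-differences realize each nonzero residue exactly twice with the required translate-distinctness.

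The main obstacle is constructing these base partitions uniformly. Axiom (ii) alone is a perfect double difference cover and is comparatively easy to arrange; the genuine difficulty is meeting axiom (i) simultaneously, which constrains how the edges interleave under translation and typically fractures into several congruence cases, each needing its own explicit starter, with the smallest orders handled by ad hoc or computer search. I expect precisely this pattern to leave a few residues or small orders uncovered --- matching the stated exceptions $q\in\{14,16,20\}$ --- where no cyclic (or direct) construction is found and the value of $f(q,3)$ remains open. Combining the constructed codes with Proposition~\ref{3q-1ub} then yields $f(q,3)=3q-1$ for all $q\ge5$ outside the exceptional set, disproving Conjecture~\ref{wrong}.
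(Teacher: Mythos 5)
Your reduction is sound as far as it goes: Proposition~\ref{3q-1ub} supplies the upper bound, your double count forcing the part sizes $(3,\dots,3,2)$ is correct, and an ODC of $K_{3q-1}$ by $3q-1$ copies of $(q-1)K_3\cup K_2$ would indeed yield a $(q,3,3q-1)$-Armstrong code and hence refute the conjecture. But the proposal stops exactly where the proof has to begin: no base partition is ever exhibited, and no existence argument is given for a single value of $q$. Phrases such as ``I would seek,'' ``the natural strategy,'' and ``I expect precisely this pattern to leave a few residues uncovered'' are declarations of intent, not steps of a proof --- the entire mathematical difficulty of the problem lives in those missing constructions. Worse, your guess that a cyclic-starter attack would fail at exactly $q\in\{14,16,20\}$ is reverse-engineered from the abstract and has no basis in your framework: in the paper those exceptions arise from gaps in the existence table for $4$-GDDs of type $2^u17^1$ (Theorem~\ref{gdd}(ii)), a tool your plan never touches.

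There is also a structural reason to doubt the plan can be completed as stated. For odd $q$ your target object is already known --- Gronau et al.~\cite{Gronauetal:1995b} construct ODCs of $K_{6m+2}$ by $2mK_3\cup K_2$'s, which the paper cites as disproving the conjecture for odd $q\ge 5$ --- so the real content is even $q$, and there $3q-1\equiv 5\pmod 6$, a range in which no such ODCs are known. The paper's key move is precisely to escape this: it relaxes ``exactly one common covered pair'' to ``at least one'' (the new notion of extODC) and works on $K_{3q}$ decomposed by $qK_3$'s, so its codes have $3q$ rows rather than your $3q-1$ (your counting argument pins down $G$ only after the unforced choice of $3q-1$ rows). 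Since each $2$-subset is still covered exactly twice, two rows agree in exactly two columns, and the Armstrong condition (ii) needs only the \emph{existence}, not the uniqueness, of a commonly covered pair; by insisting on exact orthogonality you are attacking a strictly harder problem than necessary. That relaxation is what enables the paper's recursive constructions from $4$-GDDs of types $2^u$ and $2^u17^1$ (Propositions~\ref{eodcodd} and~\ref{eodceven1}), with only the four cases $q\in\{6,8,10,12\}$ handled by explicit starters. In short: correct target, correct counting, but the core constructions are absent, and the framework you committed to is the one the paper deliberately abandoned.
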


Unfortunately, this conjecture is false. When $m\geq 2$, an ODC of
$K_{6m+2}$ by $6m+2$ $2mK_3\cup K_2$'s has been constructed by
Gronau et al. \cite{Gronauetal:1995b}. This gives a
$(2m+1,3,6m+2)$-Armstrong code and hence $f(2m+1,3)\geq 6m+2$. Thus,
Conjecture \ref{wrong} is false for all odd $q\geq 5$. One of the
primary aims of this paper is to prove that Conjecture \ref{wrong}
is also false for even $q$. In fact, we determine that $f(q,3)=3q-1$
for all $q\geq5$, with three possible exceptions.

\section{$(q,3,3q-1)$-Armstrong Codes}

We prove $f(q,3)=3q-1$ by showing the existence of $(q,3,3q-1)$-Armstrong codes.
Our proof is constructive and uses techniques from combinatorial design theory. We
briefly review some required concepts below.

\subsection{Combinatorial Designs}

A {\em set system} is a pair ${\frak S}=(X,\A)$, where $X$ is a
finite set of {\em points} and $\A\subseteq 2^X$. Elements of $\A$
are called {\em blocks}. The {\em order} of $\frak S$ is the number
of points in $X$, and the {\em size} of $\frak S$ is the number of
blocks in $\A$. Let $K$ be a set of positive integers. A set system
$(X,\A)$ is {\em $K$-uniform} if $|A|\in K$ for all $A\in\A$. A {\em
parallel class} of a set system $(X,\A)$ is a set ${\cal
P}\subseteq\A$ that partitions $X$. A {\em resolvable set system} is
a set system whose set of blocks can be partitioned into parallel
classes.

\begin{definition}
A triple system TS$(m, \lambda)$ is a $\{3\}$-uniform set system
$(X, \A)$ of order $m$ such that every $2$-subset of $X$ is
contained in exactly $\lambda$ blocks of $\A$.
\end{definition}

\begin{definition}
Let $(X,\A)$ be a set system and let $\G$ be a partition of $X$ into
subsets, called {\em groups}. The triple $(X,\G,\A)$ is a {\em group
divisible design} (GDD) when every 2-subset of $X$ not contained in a
group is contained in exactly one block, and $|A\cap G|\leq 1$ for all
$A\in\A$ and $G\in\G$.

\end{definition}

We denote a GDD $(X,\G,\A)$ by $k$-GDD if $(X,\A)$ is $\{k\}$-uniform.
The {\em type} of a GDD $(X,\G,\A)$ is the multiset $\langle |G| :
G\in\G\rangle$. When more convenient, the exponential notation is
used to describe the type of a GDD: a GDD of type $g_1^{t_1}
g_2^{t_2} \cdots g_s^{t_s}$ is a GDD where there are exactly $t_i$
groups of size $g_i$, $i\in[s]$. The following results are known
(see, for example, \cite{Abeletal:2007b,Ge:2007}).

\begin{theorem}
\hfill
\label{gdd}
\begin{enumerate}[(i)]
\item A resolvable TS$(m,2)$ exists if and only if
$m\equiv 0 \pmod 3$ and $m\neq 6$.
\item There exists a $4$-GDD of type $2^um^1$
for each $u \geq 6$, $u \equiv 0 \pmod 3$ and $m \equiv 2 \pmod 3$
with $2 \leq m \leq u - 1$, except for $(u, m) = (6, 5)$ and possibly
except for $(u, m) \in \{(21, 17), (33, 23), (33, 29), (39, 35), (57,
44)\}$.
\end{enumerate}
\end{theorem}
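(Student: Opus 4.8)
The plan is to regard Theorem~\ref{gdd} as two existence results in combinatorial design theory and to establish each by the familiar three-part strategy: derive the numerical necessary conditions by counting, settle the small orders by direct (difference) constructions, and reach all large orders by Wilson-type recursion. Since each partition of $X$ in a resolvable design is a parallel class, the two statements are really assertions about resolvable $2$-$(m,3,2)$ designs and about block-size-$4$ group divisible designs with one distinguished large group.

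For part~(i), I would first observe that since each parallel class of a resolvable TS$(m,2)$ partitions $X$ into triples, we need $3\mid m$; an incidence count then shows that each point lies in exactly $m-1$ triples, so there are $m-1$ parallel classes, each consisting of $m/3$ triples. Sufficiency I would obtain recursively: take a resolvable two-fold triple system of smaller order together with a suitable transversal-design ingredient, inflate via Wilson's fundamental construction, and fill the resulting groups with base designs, while constructing the handful of small admissible orders $m=3,9,12,\dots$ directly from difference families over $\bz_m$. The single value $m=6$ must be disposed of separately by a nonexistence argument, exhibiting a counting/parity obstruction that rules out any resolvable TS$(6,2)$.

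For part~(ii), the necessary conditions arise from degree counts: a point of the size-$m$ group lies on $2u/3$ blocks, forcing $3\mid u$, while a point of a size-$2$ group lies on $(2u+m-2)/3$ blocks, which together with $3\mid u$ forces $m\equiv 2\pmod 3$; the bound $2\le m\le u-1$ is the admissibility constraint preventing the distinguished group from being too large relative to the small groups. For sufficiency I would again apply Wilson's fundamental construction, weighting a master GDD and then filling groups, using $4$-GDDs of small type and transversal designs $\mathrm{TD}(4,n)$ (equivalently two mutually orthogonal Latin squares of order $n$) as ingredients, with explicit difference constructions handling the base cases and the delicate behaviour near the boundary $m=u-1$.

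The main obstacle in both parts is not the necessary conditions, which are immediate, but establishing sufficiency uniformly across every admissible residue class while isolating the genuinely exceptional parameters. The finitely many sporadic cases resist the uniform recursion and must be treated by ad hoc difference families or computer search; the five ``possibly except'' pairs in~(ii) are precisely those for which no construction is currently known and the order is too large for exhaustive verification. Since both statements are already settled in the literature, I would ultimately invoke the relevant existence theorems of \cite{Abeletal:2007b,Ge:2007} after checking that their hypotheses cover the parameter ranges needed here.
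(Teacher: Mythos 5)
The paper gives no proof of Theorem~\ref{gdd} at all: it is quoted as a known result, prefaced only by ``The following results are known (see, for example, \cite{Abeletal:2007b,Ge:2007})'', which is exactly what your closing paragraph does, so your proposal is in essence the same as the paper's approach. Your preliminary sketch (necessary conditions by counting, small cases by difference constructions, large cases by Wilson-type recursion, sporadic exceptions handled ad hoc) is a fair description of how the cited literature establishes these theorems, but it is an outline rather than a proof, and the paper itself rests entirely on the citation.
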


\subsection{Extorthogonal Double Covers}

A {\em suborthogonal double cover} (subODC) is a collection of
partitions of $[m]$ similar to an ODC except that for any two
partitions there is {\em at most} one 2-subset of $[m]$ covered by
both partitions. SubODCs were first studied by Hartmann and
Schumacher \cite{HartmannSchumacher:2000}, who considered them as
generalized ODCs under circumstances when ODCs do not exist. Here,
we consider another generalization, called {\em extorthogonal double
covers} (extODC). These are similar to ODCs, except that for any two
partitions there is {\em at least} one 2-subset of $[m]$ covered by
both partitions. We construct  $(q,3,3q-1)$-Armstrong codes from a
special class of extODCs of $K_{3q}$ by $qK_3$'s.

\begin{proposition}\label{equi}
If there exists an extODC of $K_{3q}$ by $qK_3$'s, then
$f(q,3)=3q-1$.
\end{proposition}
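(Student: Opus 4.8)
The plan is to show that Sali's construction (reviewed in Section~III-B) applied to an extODC of $K_{3q}$ by $qK_3$'s yields a $(q,3,3q-1)$-Armstrong code; combined with the upper bound $f(q,3)\le 3q-1$ of Proposition~\ref{3q-1ub}, this forces $f(q,3)=3q-1$. So it suffices to produce a $(q,3,3q-1)$-Armstrong code from the extODC.

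First I would pin down the number of partitions $N$ in such an extODC by double counting. Let $X$ with $|X|=3q$ be the underlying point set. Each partition consists of $q$ triples and hence covers exactly $3q$ of the $\binom{3q}{2}$ 2-subsets of $X$. Since each 2-subset is covered by exactly two partitions (property (ii)), counting incidences between partitions and the 2-subsets they cover gives $3qN=2\binom{3q}{2}=3q(3q-1)$, so $N=3q-1$. Ordering the $q$ parts of each partition arbitrarily and recording, for each point, the index of the part containing it, produces a $3q\times(3q-1)$ array over $[q]$ in which each symbol occurs three times per column; let $C$ be its set of rows, a $q$-ary code of length $n=3q-1$ and size $3q$.

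The heart of the argument is the dictionary between points and codewords. Two codewords indexed by points $i,i'\in X$ agree in the coordinate corresponding to a partition $P$ if and only if $i$ and $i'$ lie in a common part of $P$, i.e.\ if and only if $\{i,i'\}$ is covered by $P$. Consequently the number of coordinates in which the two codewords agree equals the number of partitions covering $\{i,i'\}$. Property (ii) then says every pair of distinct codewords agrees in \emph{exactly} two coordinates, so every pairwise Hamming distance equals $n-2=3q-3=n-k+1$ with $k=3$; this establishes the minimum-distance condition (i) (indeed $C$ is equidistant). For condition (ii), fix any pair of coordinates, corresponding to partitions $P_a,P_b$. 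The extODC property (i$'$) guarantees a 2-subset $\{i,i'\}$ covered by both $P_a$ and $P_b$; since $\{i,i'\}$ is covered by exactly two partitions, these must be precisely $P_a$ and $P_b$, so the codewords indexed by $i$ and $i'$ agree in exactly the coordinates $a$ and $b$. Thus every direction is witnessed, $C$ is a $(q,3,3q-1)$-Armstrong code, and $f(q,3)\ge 3q-1$.

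The proof is essentially a translation, so there is no deep obstacle; the one point demanding care is verifying that the two defining conditions of an extODC correspond exactly, and not merely in spirit, to the two conditions characterizing Armstrong codes. Specifically I would stress that the weakening of the ODC requirement ``exactly one common 2-subset'' to ``at least one'' (property (i$'$)) is precisely what is needed to witness every direction, while the \emph{unchanged} condition (ii) is what forces the common distance $n-k+1$. The remaining subtlety is the clean double-counting step fixing $N=3q-1$, which guarantees the code has the correct length and hence meets the upper bound of Proposition~\ref{3q-1ub} with equality.
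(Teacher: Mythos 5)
Your proposal is correct and follows essentially the same route as the paper: the same double count giving $3q-1$ partitions, the same point-by-partition array construction, and the same appeal to Proposition~\ref{3q-1ub} for the matching upper bound. The only difference is that you spell out the verification of the two Armstrong conditions (agreement-counts via the "exactly two" covering property and the "at least one" intersection property), which the paper compresses into the phrase "by the definition of an extODC."
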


\begin{proof}
By considering 2-subsets, the number of partitions in an extODC of
$K_{3q}$ by $qK_3$'s is easily seen to be
${2\binom{3q}{2}}/{3q}=3q-1$. For each partition, arbitrarily order
the $q$ parts. Define a $3q\times (3q-1)$  $q$-ary array by indexing
each column by a partition and each row by a point of the extODC.
For each partition, the corresponding column has the symbol $i$ in
the rows indexed by the points in the $i$th part. The set of rows in
this array is a $(q,3,3q-1)$-Armstrong code, by the definition of an
extODC. This, together with Proposition \ref{3q-1ub}, implies that
$f(q,3)=3q-1$.
\end{proof}
\vskip 10pt

It is easy to see that  an extODC of $K_{3q}$ by $3q-1$ $qK_3$'s is
a resolvable TS$(3q,2)$ with the additional property that every two
parallel classes cover a common 2-subset.  Although $f(q,3)$ is
known for odd $q$, it is still interesting to know when extODCs of
$K_m$, $m$ odd, can exist. We have the following result for $m=3q$,
$q$ odd.

\begin{proposition}\label{eodcodd}
There exists an extODC of $K_{3q}$ by $qK_3$'s, for all odd
$q\geq 5$.
\end{proposition}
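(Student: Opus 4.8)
The plan is to exploit the equivalence noted just above: an extODC of $K_{3q}$ by $qK_3$'s is precisely a resolvable TS$(3q,2)$ in which every two parallel classes cover a common $2$-subset. Since $3q\equiv 0\pmod 3$ and $3q\ge 15$, Theorem~\ref{gdd}(i) already guarantees a resolvable TS$(3q,2)$, so the only real content is the sharing condition. First I would record the relevant counting to see how rigid the target is. Each $2$-subset lies in exactly two triples, which are necessarily in two distinct classes (triples within a parallel class are disjoint), so the total number of (class-pair, common-$2$-subset) incidences equals $\binom{3q}{2}$, spread over the $\binom{3q-1}{2}$ pairs of classes. Hence
\begin{equation*}
\sum_{\{P,P'\}}\bigl(\text{number of common }2\text{-subsets of }P,P'\bigr)=\binom{3q}{2}=\binom{3q-1}{2}+(3q-1),
\end{equation*}
so the surplus above ``one common $2$-subset per class-pair'' is only $3q-1$. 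The object we seek is therefore an almost-perfect ODC: every class-pair must share at least one $2$-subset, while the total excess is tiny, which rules out a crude probabilistic argument and forces a structured construction.

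For odd $q$ the hypothesis $q\ge5$ makes $3q-1$ even, which I would use to build a $1$-rotational solution on the point set $\bz_{3q-1}\cup\{\infty\}$. The idea is to specify a single base parallel class $P$ partitioning these $3q$ points into $q$ triples, with exactly one triple of the form $\{a,b,\infty\}$, and to take its $3q-1$ translates under $\bz_{3q-1}$ (translating finite points, fixing $\infty$) as the parallel classes. Resolvability is then automatic, since every class is a translate of the partition $P$. The condition that the translates form a TS$(3q,2)$ reduces to a difference condition on $P$: the pairs $\{x,\infty\}$ are each covered exactly twice by the translates of the single $\infty$-triple, while the finite-finite pairs are covered twice precisely when the multiset of directed differences arising from the $q$ base triples covers each nonzero element of $\bz_{3q-1}$ exactly twice. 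A slot count ($6(q-1)+2=2(3q-2)$) shows this is an exact-fit difference problem with no slack, so $P$ must be a perfect difference family of the right shape; the self-paired element $(3q-1)/2$ must be placed with care, which is where the parity of $q$ enters.

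The sharing condition is the crux, and here the rotational structure pays off: the classes $P+s$ and $P+t$ share a common $2$-subset if and only if $P$ and $P+(t-s)$ do. This collapses the $\binom{3q-1}{2}$ requirements to a single family indexed by the translation distance, namely that for every $d\in\bz_{3q-1}\setminus\{0\}$ the base class $P$ and its translate $P+d$ contain a common pair; equivalently, $d$ must be realizable as a difference between basepoints of two triples carrying the same internal pair. I would therefore design $P$ so that its triples, besides solving the difference equations, have internal pairs whose basepoint-differences sweep out all of $\bz_{3q-1}$, guaranteeing an overlap at every distance.

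The main obstacle is satisfying these two demands simultaneously: the TS$(3q,2)$ difference equations are completely rigid (no slack, as the slot count shows), yet $P$ must additionally force an overlap at every translation distance while the global surplus budget is only $3q-1$. My plan to overcome this is to exhibit $P$ explicitly as a parametric family of base triples depending on the residue of $q$ modulo a small modulus, verify the double-cover difference equations by a direct counting identity, and confirm the overlaps by displaying, for each distance $d$, a concrete pair of base triples that coincide after translation by $d$. The delicate points---the placement of the half-difference $(3q-1)/2$ and the handling of the $\infty$-triple so that it too shares a pair with every translate---are exactly where the odd parity of $q$ is essential, and I would treat small residue classes of $q$ separately should a uniform base block prove elusive.
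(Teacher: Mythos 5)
Your framework is sound as far as it goes: the reduction of the extODC conditions to (a) a difference condition on a single base parallel class $P$ over $\bz_{3q-1}\cup\{\infty\}$ and (b) an overlap condition between $P$ and each translate $P+d$, $d\neq 0$, is exactly the ``base partition'' machinery the paper itself sets up (the two properties (i) and (ii) preceding Proposition~\ref{small}), and your counting sanity check and the remark about the self-paired difference $(3q-1)/2$ are correct. But the proposal stops precisely where the proof would have to begin. Everything hinges on actually exhibiting, for every odd $q\geq 5$, a base class $P$ that simultaneously solves the exact-fit difference equations and realizes an overlap at every translation distance; you only state a \emph{plan} to ``exhibit $P$ explicitly as a parametric family,'' with no candidate triples, no difference identity, and no verification of the overlaps. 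Producing such families uniformly in $q$ is the entire difficulty -- it is a perfect-difference-family problem (of Skolem type) with an extra global intersection constraint, and there is no a priori guarantee that a clean parametric solution exists for all odd $q$. Indeed, the paper resorts to this rotational method only for the four sporadic even orders $q\in\{6,8,10,12\}$, where base partitions could be found by search (Table~\ref{basepartition}), precisely because no uniform family was available.

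For contrast, the paper's proof of this proposition avoids difference methods altogether: it takes a $4$-GDD $(X,\G,\A)$ of type $2^u$ with $u=(3q-1)/2$ (which exists by Theorem~\ref{gdd}, using that $u\equiv 1\pmod 3$ when $q$ is odd), and for each point $x\in X$ forms the partition $\B_x=\{B\setminus\{x\}:x\in B\in\A\}\cup\{G\cup\{\infty\}:x\in G\in\G\}$. Orthogonality is then checked directly: two partitions $\B_x,\B_y$ share $\{x,y\}$ if $x,y$ lie in a common group, and share $B\setminus\{x,y\}$ if $\{x,y\}$ lies in a block $B$; and every pair is covered exactly twice. If you want to salvage your route, you would need to either supply the explicit parametric base partitions with full verification, or switch to a structural source of the partitions (such as the GDD above) that makes both extODC conditions checkable without solving difference equations.
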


\begin{proof}
Let $u=(3q-1)/2$.  Starting from a $4$-GDD $(X,\G,\A)$ of type
$2^u$, whose existence is guaranteed by Theorem~\ref{gdd}, we
construct an extODC of $K_{2u+1}$ over $X\cup\{\infty\}$ from $\A$.
For each $x\in X$, let $\B_x=\{B\setminus \{x\}: x\in B\in
\A\}\cup\{G\cup\{\infty\}:x\in G\in \G\}$. Then $\B_x$ is a
partition of $X\cup\{\infty\}$. We claim that $\{\B_x:x\in X\}$ is
an extODC of $K_{2u+1}$.

Indeed, for any two partitions, say $\B_x$ and $\B_y$, both of which cover
$\{x,y\}$ if $x,y$ are in the same group, and cover $B\setminus
\{x,y\}$ if $x,y \in B$ are in distinct groups. For
 each pair $\{x,y\}\subset X\cup\{\infty\}$, if $\{x,y\}\subset
G\cup\{\infty\}$ for some $G\in \G$, then $\{x,y\}$ is covered by
two partitions $\B_g$, $g\in G$; if $x,y \in X$ are in distinct
groups, then there exists exactly one block $B\in \A$ such that
$\{x,y\}\subset B$, while $\{x,y\}$ is covered by two partitions
$\B_g$, $g\in B\setminus \{x,y\}$. Hence, $\{\B_x:x\in X\}$ is an
extODC.
\end{proof}
\vskip 10pt

We now construct extODCs of $K_m$, where $m=3q$ is even. Define a
{\em base partition}  of order $m$, which is a  partition $P$ of
$\mathbb{Z}_{m-1}\cup\{\infty\}$ into triples with the following two
properties:
\begin{enumerate}[(i)]
\item $\langle \pm (a-b): \{a,b\}\subset C\in P \text{ and }  \infty\not\in \{a,b\}\rangle=2(\mathbb{Z}_{m-1}\setminus
\{0\})$.
\item $\langle i: \{a,b\}+i=\{c,d\} \text{ for some } \{a,b\}\subset C, \{c,d\}\subset C' \text{ and } C,C'\in
P\rangle \supset (\mathbb{Z}_{m-1}\setminus \{0\})$, where $\infty
+i:= \infty$.
\end{enumerate}
Here we use angled brackets $\langle \cdot\rangle$ for multisets. For each $j\in \mathbb{Z}_{m-1}$, let $P_j=\{j+C|C\in P\}$. Then $P_j$, $j\in \mathbb{Z}_{m-1}$ are partitions of
$\mathbb{Z}_{m-1}\cup\{\infty\}$, which forms an extODC of $K_m$.
The first property ensures that each pair occurs exactly twice,
while the second ensures that any two partitions cover at least one
common 2-subset.

\begin{proposition} \label{small}
There exists an extODC of $K_{3q}$ by $qK_3$'s, for $q\in \{6,8,10,12\}$.
\end{proposition}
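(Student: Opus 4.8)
The plan is to apply the base-partition machinery developed immediately above the statement: for each $q\in\{6,8,10,12\}$ I would exhibit an explicit base partition $P$ of $\mathbb{Z}_{3q-1}\cup\{\infty\}$ into $q$ triples, exactly one of which contains $\infty$, and then develop it cyclically modulo $3q-1$ to obtain the partitions $P_j=\{j+C\mid C\in P\}$, $j\in\mathbb{Z}_{3q-1}$, which by the paragraph preceding the proposition form the desired extODC of $K_{3q}$ by $qK_3$'s. The four cases thus amount to working over $\mathbb{Z}_{m-1}$ with $m-1\in\{17,23,29,35\}$.

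Before searching, I would record the counting that makes property (i) plausible and guides the verification. The $q-1$ finite triples each contribute three finite pairs, while the $\infty$-triple contributes a single finite pair, so the multiset of signed differences has size $6(q-1)+2=2(3q-2)$, which is exactly the size of $2(\mathbb{Z}_{3q-1}\setminus\{0\})$. Hence property (i) is precisely the requirement that this difference multiset meet each nonzero residue with multiplicity exactly two, a perfect index-two difference condition on the triples; checking it for a candidate $P$ is a routine tally of the $6q-4$ differences, and it guarantees, upon cyclic development, that every finite $2$-subset is covered by exactly two of the $P_j$ (the pairs through $\infty$ being automatically covered twice).

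The genuine core of the argument is locating a partition that \emph{also} satisfies property (ii), namely that the multiset of shifts $i$ carrying some pair inside a triple of $P$ onto a pair inside a triple of $P$ covers every nonzero element of $\mathbb{Z}_{3q-1}$. This is the main obstacle: property (i) resembles a cyclic triple system of index two, for which many solutions exist, whereas the covering requirement (ii) is what forces any two developed partitions $P_j$, $P_{j'}$ to share a common $2$-subset, and it is considerably more restrictive. I would locate the base partitions by a targeted computer search—fixing the $\infty$-triple, then extending the remaining triples greedily while pruning on the partial difference multiset for (i) and the partial shift multiset for (ii)—and present the four resulting base partitions explicitly as lists of triples.

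With the four base partitions in hand, the conclusion is immediate: property (i) yields that every $2$-subset is covered exactly twice among the $P_j$, and property (ii) yields that any two of them cover a common $2$-subset, so each family $\{P_j\}$ is an extODC of $K_{3q}$ by $qK_3$'s. The remaining work is the purely mechanical verification of (i) and (ii) for each listed partition.
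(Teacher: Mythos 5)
Your proposal takes essentially the same approach as the paper: the paper's entire proof consists of exhibiting, for each $q\in\{6,8,10,12\}$, an explicit base partition of $\mathbb{Z}_{3q-1}\cup\{\infty\}$ into $q$ triples (listed in its Table of base partitions, e.g.\ $\{0,1,2\}\,\{3,7,12\}\,\{4,15,\infty\}\,\{5,8,14\}\,\{6,10,13\}\,\{9,11,16\}$ for $q=6$) and then invoking the cyclic-development machinery stated just before the proposition, exactly as you outline. Your difference count $6(q-1)+2=2(3q-2)$, your observation that pairs through $\infty$ are automatically covered twice, and your identification of property (ii) as the real constraint are all correct; the only content you defer to a search --- the four explicit lists of triples --- is precisely what the paper's proof supplies, so once that data is written down your argument is complete.
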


\begin{proof}
The base partitions for extODC of $K_{3q}$, for $q\in\{6,8,10,12\}$,
are given in Table~\ref{basepartition}.


 \begin{table}
 \caption{Base Partitions for Some Small extODCs}
  \label{basepartition}
  \[ \begin{array}{rl}
   \hline
   q & \text{triples} \\
   \hline
6 &\{0,1,2\} \{3,7,12\} \{4,15,\infty\}
\{5,8,14\} \{6,10,13\} \{9,11,16\} \\
 \hline
 8 & \{0,1,2\}
\{3,5,8\} \{4,12,18\} \{6,15,19\} \{7,14,\infty\} \{9,17,21\}\\
&\{10,13,20\} \{11,16,22\} \\
 \hline
10 & \{0,1,2\} \{3,5,8\} \{4,10,20\} \{6,23,\infty\} \{7,11,22\} \{9,17,27\}\\& \{12,16,25\} \{13,21,28\} \{14,19,26\} \{15,18,24\}\\
\hline 12 & \{0,1,2\} \{3,5,8\} \{4,7,15\} \{6,19,34\} \{9,13,27\}
\{10,20,25\} \\&\{11,22,28\} \{12,24,33\} \{14,23,30\} \{16,26,32\}
\{17,21,29\}\\& \{18,31,\infty\}\\  \hline
   \end{array}\]
 \end{table}
\end{proof}

\begin{proposition}
\label{eodceven1} There exists an extODC of $K_{3q}$ by $qK_3$'s,
for all even $q\geq 18$, $q\neq 20$.
\end{proposition}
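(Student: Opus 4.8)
The plan is to imitate the $\infty$-adjunction construction of Proposition~\ref{eodcodd}, but starting from a $4$-GDD of type $2^u m^1$ (Theorem~\ref{gdd}(ii)) and filling the single large group together with $\infty$ by a smaller, recursively obtained extODC. Concretely, I would fix $m=17$ and $u=(3q-18)/2=3(q/2-3)$, adjoin a new point $\infty$ to the point set $X$ of a $4$-GDD $(X,\G,\A)$ of type $2^u 17^1$, and produce $|X|=2u+17=3q-1$ partitions of $X\cup\{\infty\}$ (the correct number by Proposition~\ref{equi}), one for each point of $X$.

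For a point $x$ lying in a group $G$ of size $2$, set $\B_x=\{B\setminus\{x\}:x\in B\in\A\}\cup\{G\cup\{\infty\}\}$, exactly as in the odd case; a short count gives $q-1$ block-derived triples plus one, so $\B_x$ is a partition into $q$ triples. For a point $z$ in the group $H$ of size $17$, the triples $\{B\setminus\{z\}:z\in B\in\A\}$ already partition all $2u=3q-18$ points of the size-$2$ groups into $q-6$ triples, leaving the $18$ points of $H\cup\{\infty\}$ to be covered; here I would install a partition drawn from an extODC of $K_{18}$ by $6K_3$'s on $H\cup\{\infty\}$, whose existence is furnished by Proposition~\ref{small}. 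Since that extODC has exactly $17$ partitions, they can be indexed bijectively by the $17$ points of $H$, so each $\B_z$ receives one and becomes a partition into $(q-6)+6=q$ triples.

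Verifying that $\{\B_x\}$ is a resolvable TS$(3q,2)$ is then a routine case check on pair types: a cross pair between two size-$2$ groups, or between a size-$2$ group and $H$, is covered twice through the remaining two points of its unique block; a within-group pair of a size-$2$ group and a pair meeting $\infty$ through a size-$2$ group are covered by the two relevant $\infty$-triples; and every pair inside $H\cup\{\infty\}$ is covered exactly twice precisely because the inserted object is a TS$(18,2)$. The extorthogonality condition splits the same way: two partitions $\B_x,\B_y$ whose indices lie in distinct groups share the pair $B\setminus\{x,y\}$ coming from their common block, and two $\B_x$'s from one size-$2$ group share $\{x,x'\}$, so the only delicate case is a pair $\B_z,\B_{z'}$ indexed by two points of $H$, for which no common block exists. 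This is exactly why the inserted structure must itself be an \emph{extODC} rather than an arbitrary resolvable design: the required shared $2$-subset is then supplied inside $H\cup\{\infty\}$ by the extorthogonality of the $K_{18}$ extODC.

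Finally I would confirm the arithmetic that makes $m=17$ uniformly admissible. With $u=3(q/2-3)$ one has $u\equiv 0\pmod 3$ and $u\ge 6$, while $m=17\equiv 2\pmod 3$ and $17\le u-1$ hold exactly when $q\ge 18$; moreover the only excluded pair of Theorem~\ref{gdd}(ii) with second coordinate $17$ is $(u,m)=(21,17)$, which corresponds to $q=20$. This both explains the hypotheses $q\ge 18$ and $q\ne 20$ and completes the construction, whereupon $f(q,3)=3q-1$ follows by Proposition~\ref{equi} together with the upper bound of Proposition~\ref{3q-1ub}. I expect the genuine work to be the extorthogonality bookkeeping for the $\B_z$'s, since that is the only place the recursive extODC hypothesis is actually used; the remaining counts are mechanical.
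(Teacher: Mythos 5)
Your proposal is correct and follows essentially the same construction as the paper's own proof: both start from a $4$-GDD $(X,\G,\A)$ of type $2^u17^1$ with $u=(3q-18)/2$, adjoin $\infty$, define $\B_x$ from the blocks through $x$ plus either the short group with $\infty$ or a partition of the long group taken from the extODC of $K_{18}$ of Proposition~\ref{small}, and check extorthogonality and double coverage by the same case analysis on where the two indices (or the two points of a pair) lie. Your explicit verification of the GDD existence conditions, which pinpoints why $q\geq 18$ and $q\neq 20$ are required, is a detail the paper leaves implicit.
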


\begin{proof}
Let $u=(3q-18)/2$. There exists a $4$-GDD $(X,\G,\A)$ of type
$2^u17^1$ by Theorem~\ref{gdd}. We construct an extODC of $K_{3q}$
(on $X'=X\cup\{\infty\}$) from $\A$. Let $G_0$ be the long group in
$\G$ of size $17$. By Proposition~\ref{small}, there exists an
extODC of $K_{18}$ (on $G_0\cup \{\infty\}$) by $17$ $6K_3$'s over
$G_0\cup \{\infty\}$. Let the set of partitions be $\{\C_x:x\in
G_0\}$. For each $x\in (X\setminus G_0)$, let $\B_x=\{B\setminus
\{x\}: x\in B\in \A\}\cup\{G\cup\{\infty\}:x\in G\in \G\}$. For each
$x\in G_0$, let $\B_x=\{B\setminus \{x\}: x\in B\in \A\}\cup \C_x$.
There are $3q-1$ $\B_x$'s in total and each $\B_x$ is a partition of
$X'$. We claim that the set of all $\B_x$'s is an extODC.

Indeed, for any
two partitions $\B_x$ and $\B_y$, they both cover $\{x,y\}$ if $x,y$
are in the same group of size $2$; cover a common 2-subset if $x,y \in
G_0$ since $\C_x$ and $ \C_y$ have a common 2-subset, and both cover
$B\setminus \{x,y\}$ if $x,y \in B$ are in distinct groups. For each
pair $\{x,y\}\subset X'$, if $\{x,y\}\subset G\cup\{\infty\}$ for
some $G\neq G_0$, then $\{x,y\}$ is covered in two partitions $\B_g$,
$g\in G$. If $\{x,y\}\subset G_0\cup\{\infty\}$,  then $\{x,y\}$ is
covered by both $\B_u$ and $\B_v$, where $\{x,y\}$ is contained in  $\C_u$
and $\C_v$. If $x,y$ are in distinct groups, then there exists
exactly one block $B\in \A$ such that $\{x,y\}\subset B$, while
$\{x,y\}$ occurs in $\B_g$, $g\in B\setminus \{x,y\}$. Hence,
$\{\B_x: x\in X\}$ is an extODC of $K_{3q}$.
\end{proof}
\vskip 10pt

Combining Propositions~\ref{equi},~\ref{eodcodd} and
\ref{eodceven1}, we give the main result of this section.

\begin{theorem}
\label{eodceven}
For all $q\geq 5$ and $q\neq 14,16,20$, there
exists an extODC of  $K_{3q}$ by $qK_3$'s, and consequently $f(q,3)=3q-1$.
\end{theorem}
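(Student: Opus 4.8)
The plan is to prove the existence claim by a straightforward case analysis on the parity and magnitude of $q$, and then to invoke Proposition~\ref{equi} to pass from existence of the extODC to the exact value of $f(q,3)$. The three preceding propositions each settle a disjoint regime: Proposition~\ref{eodcodd} supplies an extODC of $K_{3q}$ by $qK_3$'s for every odd $q\geq 5$; Proposition~\ref{small} supplies one for the four small even cases $q\in\{6,8,10,12\}$; and Proposition~\ref{eodceven1} supplies one for all even $q\geq 18$ with $q\neq 20$. So the first step is simply to partition the range $q\geq 5$ along these lines and assign each admissible $q$ to the proposition that covers it.

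The second step is to verify that this assignment is exhaustive apart from the stated exceptions. The odd values $q\geq 5$ are completely covered by Proposition~\ref{eodcodd}. Among the even values, Proposition~\ref{small} handles $\{6,8,10,12\}$ and Proposition~\ref{eodceven1} handles $\{18,22,24,26,\ldots\}$, so the even values left untouched are exactly $14$, $16$, and $20$. Hence an extODC of $K_{3q}$ by $qK_3$'s exists for every $q\geq 5$ with $q\notin\{14,16,20\}$, which is the first assertion of the theorem.

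The final step is to apply Proposition~\ref{equi} uniformly across the admissible range: for each such $q$, the existence of an extODC of $K_{3q}$ by $qK_3$'s forces $f(q,3)=3q-1$, yielding the second assertion and completing the argument.

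Since all of the genuine combinatorial content has already been discharged inside the three propositions (in particular the GDD recursions of Theorem~\ref{gdd} and the explicit base partitions of Table~\ref{basepartition}), this proof is essentially bookkeeping. The only point that demands care---and the closest thing to an obstacle---is confirming that the even residues missed by Propositions~\ref{small} and~\ref{eodceven1} are precisely $14$, $16$, and $20$, so that the exception set is neither enlarged by an overlooked gap nor shrunk by an unjustified claim of coverage.
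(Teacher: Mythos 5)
Your proposal is correct and matches the paper's own (one-line) proof, which simply combines Propositions~\ref{equi}, \ref{eodcodd} and \ref{eodceven1}; your bookkeeping that the uncovered even values are exactly $14$, $16$, $20$ is precisely the content of that combination. If anything, you are slightly more careful than the paper, since you explicitly cite Proposition~\ref{small} for $q\in\{6,8,10,12\}$, which the paper's citation list omits even though it is needed for those cases (and is used inside the proof of Proposition~\ref{eodceven1}).
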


Before closing this section, we estimate the values of $f(q,3)$ for
$q=3$ and $4$.

\begin{proposition}
\label{q34} $f(3,3)=7$ and $f(4,3)\in \{10,11\}$.
\end{proposition}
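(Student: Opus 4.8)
The plan is to separate upper and lower bounds and to exploit the correspondence, already developed in this section, between $(q,3,n)$-Armstrong codes and resolvable triple systems / extODCs. The upper bounds are free: Proposition~\ref{3q-1ub} gives $f(3,3)\le 8$ and $f(4,3)\le 11$. The lower bound $f(3,3)\ge 7$ is witnessed by the explicit $(3,3,7)$-Armstrong code of Example~\ref{k7}. It therefore remains to prove that no $(3,3,8)$-Armstrong code exists, and to exhibit a $(4,3,10)$-Armstrong code.

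For $f(3,3)=7$, suppose a $(3,3,8)$-Armstrong code $C$ on $m$ codewords exists; it is a ternary code of length $8$ and minimum distance $6$, so any two codewords agree in at most two coordinates. Counting codeword pairs that share a symbol column-by-column, and using that each of the $8$ columns splits the $m$ codewords into at most three symbol classes, convexity and the distance bound give $2\binom{m}{2}\ge \sum_j\sum_s\binom{n_{j,s}}{2}\ge \tfrac{4}{3}m(m-3)$, which forces $m\le 9$; meanwhile the Armstrong property demands a distinct codeword pair agreeing exactly in each of the $\binom{8}{2}=28$ coordinate pairs, so $\binom{m}{2}\ge 28$ and $m\ge 8$. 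Hence $m\in\{8,9\}$. If $m=9$, equality throughout forces every column to have type $3^3$ and every two codewords to agree in exactly two coordinates, so $C$ is precisely a resolvable $\mathrm{TS}(9,2)$ in which any two parallel classes cover a common $2$-subset, i.e.\ an extODC of $K_9$ by $3K_3$'s. If $m=8$, each column contributes at least $\binom{3}{2}+\binom{3}{2}+\binom{2}{2}=7$ to the middle sum, capped at $56$ by the distance bound, so every column has type $3^2 2^1$ and the $28$ codeword pairs are in bijection with the $28$ coordinate pairs. The plan is to exclude both rigid configurations by a finite analysis, after which $f(3,3)=7$; note that the $m=9$ case is exactly the nonexistence of the extODC object of Proposition~\ref{equi} for $q=3$.

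For $f(4,3)\in\{10,11\}$, only the lower bound $f(4,3)\ge 10$ needs work, since $3q-2=10$ lies just outside the range $q\ge 5$ of the Ganter--Gronau construction. The plan is to build a $(4,3,10)$-Armstrong code directly from an ODC of $K_{10}$ by $3K_3\cup K_1$'s (the $q=4$ analogue of the object yielding $f(q,3)\ge 3q-2$), presented through an explicit cyclic base partition over $\bz_9\cup\{\infty\}$ in the style of Example~\ref{k7} and verified by inspection. With $f(4,3)\le 11$ this gives $f(4,3)\in\{10,11\}$. The remaining gap is genuine rather than an artifact of the argument: pinning the value down is equivalent to deciding the existence of a $(4,3,11)$-Armstrong code, i.e.\ of an extODC of $K_{12}$ by $4K_3$'s, and $q=4$ is covered by neither Proposition~\ref{small} nor Proposition~\ref{eodceven1}.

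The main obstacle is the finite but delicate nonexistence step for $f(3,3)$, and in particular the $m=8$ configuration, which is not of the standard resolvable-design form. I expect to handle it by first extracting the strong regularity it forces (for instance, the blockmate count shows each point lies in a size-$2$ block of exactly two columns, so the eight $2$-blocks form a $2$-regular graph on the $8$ points), and then eliminating the few surviving cases either by hand or by a short exhaustive check; the $m=9$ case should follow from a similar inspection of the (few) resolvable $\mathrm{TS}(9,2)$, each of which I expect to contain a pair of ``orthogonal'' parallel classes violating the extODC condition.
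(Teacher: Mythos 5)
Your treatment of $q=3$ follows the paper's argument almost exactly: the same double count of equal-entry pairs gives $8\le m\le 9$ (your convexity derivation of $m\le 9$ is in fact a nice self-contained substitute for the paper's appeal to the ternary-code bound of Brouwer et al.\ \cite{Brouwer:1998}), and you correctly pin down the two rigid configurations --- for $m=8$ an ODC of $K_8$ by eight $2K_3\cup K_2$'s, for $m=9$ an extODC of $K_9$ by eight $3K_3$'s. But at that point you only \emph{plan} to eliminate them (``by hand or by a short exhaustive check''), and this elimination is the actual content of the nonexistence proof. The paper settles the $m=8$ case by citing the nonexistence theorem of Ganter and Gronau \cite{GanterGronau:1991} --- itself a nontrivial result --- and the $m=9$ case by computer search. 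Your regularity observations (each point in exactly two $2$-blocks, etc.) are correct but are only the opening moves of such a case analysis, so as written the bound $f(3,3)\le 7$ is not established.

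The $q=4$ part contains a step that would fail outright. You propose to realize $f(4,3)\ge 10$ by constructing an ODC of $K_{10}$ by $3K_3\cup K_1$'s from a cyclic base partition over $\bz_9\cup\{\infty\}$. No such ODC exists: $n=10$ is precisely the exceptional case of the Demetrovics--F\"uredi--Katona conjecture, and its nonexistence is part of the Ganter--Gronau result \cite{GanterGronau:1991} --- this is exactly why their theorem, and hence the bound $f(q,3)\ge 3q-2$ quoted in the paper, is stated only for $q\ge 5$. No search for a base partition can succeed, so your lower bound for $q=4$ evaporates. The paper instead takes an ODC of $K_{10}$ by ten $K_4\cup 3K_2$'s (still a disjoint union of four cliques, so it still yields a $4$-ary code under the partition-to-column correspondence), whose existence is also due to Ganter and Gronau; your argument needs this substitution, or some other existing partition type into four cliques, to go through.
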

\begin{proof}For $q=3$, we have $f(3,3)\leq 8$. Suppose that $C$ is
a $(3,3,8)$-Armstrong code of size $m$. We consider the total number $s$ of pairs of
equal entries in the same coordinates of $C$. In any pair of codewords of $C$  at most two coordinates can have equal entries because the  minimum distance of $C$ is six. Hence, $s\leq 2\times\binom{m}{2}$. By the defining condition (ii)
of Armstrong codes, for each pair of coordinates there is at least one pair of codewords agreeing in exactly those coordinates. Further, for different pairs of coordinates, the pairs of codewords are different, i.e., the pairs of equal entries are all different.  Thus $s\geq 2\times\binom{8}{2}$ and then $m\geq 8$. However, since $C$ is also a
ternary code of distance six, we have $m\leq 9$ \cite{Brouwer:1998}.
Now we claim that either $m=8$ or $9$ is impossible, hence $C$ does
not exist. Consider $C$ as an $m\times 8$ array. When $m=8$ or $9$,  each column has at least $7$ or $9$ pairs of equal entries, respectively, which is achieved when the three symbols occur almost the same frequency. Since the total number $s$ of such pairs in $C$ is at most $56$ if $m=8$ or $72$ if $m=9$,  there are exactly $7$ or $9$ such pairs in each column when $m=8$ or $9$, respectively. When $m=8$, $C$ is equivalent to an ODC of $K_8$ by eight $2K_3\cup
K_2$'s, which does not exist by \cite{GanterGronau:1991}. When
$m=9$, $C$ is equivalent to an extODC of $K_9$ by eight $3K_3$'s, which
could be excluded easily by computer search. So we conclude that
$f(3,3)\leq 7$. An optimal code exists by Example~\ref{k7}.


For $q=4$, we have $f(4,3)\leq 11$. A $(4,3,10)$-Armstrong code
exists by the existence of an ODC of $K_{10}$ by ten $K_4\cup3K_2$'s
\cite{GanterGronau:1991}.
\end{proof}

\section{Generalized Armstrong Codes}

The concept of functional dependencies was
generalized by Demetrovics, Katona, and Sali \cite{Demetrovicsetal:1992}.

\begin{definition}
Let $X\subseteq A$ and $y\in A$ for a given table $R(A)$. Then for
positive integers $s\leq t$, we say that
$y$ {\em $(s,t)$-depends} on $X$, written $X\stackrel{(s,t)}{\longrightarrow} y$,
if there do not exist $t+1$ data items (rows)
$d_1,d_2,\ldots,d_{t+1}$ of $R(A)$ such that
\begin{enumerate}[(i)]
\item $|\{ d_i\!\mid_{\{x\}}: 1\leq i\leq t+1\}| \leq s$ for each $x\in X$, and
\item $|\{ d_i\!\mid_{\{y\}}: 1\leq i\leq t+1\}| = t+1$.
\end{enumerate}
\end{definition}

 Our usual concept of functional dependency is equivalent to
the special case of $(1,1)$-dependency. When functional dependencies
are not known,
$(s,t)$-dependencies identified
in a relational database can still be exploited for improving
storage efficiency
\cite{Demetrovicsetal:1992,Demetrovicsetal:1995,Demetrovicsetal:1998,KatonaSali:2004}.

Given $1\leq s\leq t$, an {\em $(s,t)$-dependent key} $K$ is a
subset of the attribute set $A$, such that $R(A)$ satisfies
$(s,t)$-dependencies $K\stackrel{(s,t)}{\longrightarrow}  y$ for all
$y\in A$. A key $K$ is called {\em minimal} if no subset of $K$ is
an $(s,t)$-dependent key. Here, we generalize Armstrong codes from
functional dependencies into $(s,t)$-dependencies.

A $q$-ary code $C$ is called a {\em $(q,k,n)_{s,t}$-Armstrong code}
if
\begin{enumerate}[(i)]
 \item for any $t+1$ rows of $C$, there exist at most $k-1$ columns such that each column has at most $s$ distinct elements in the $t+1$ rows, and
\item for any $k-1$ columns of $C$, there exist $t+1$ rows such that each of the $k-1$ columns has at most $s$ distinct elements in the $t+1$
rows. Further, there exists a column having exactly $t+1$ distinct
elements in these $t+1$ rows.
  \end{enumerate}

Consider the Armstrong code defined above as an Armstrong instance
with $n$ attributes. The first property in the definition makes sure
each $k$-subset of $K_n^k$ is an $(s,t)$-dependent key, while the
second property ensures that each key is minimal. It is clear that
we need $q> s, t$ and $k>1$ for a $(q,k,n)_{s,t}$-Armstrong code to
be meaningful. Note that a $(q,k,n)_{1,1}$-Armstrong code is just a
$(q,k,n)$-Armstrong code.

\begin{definition}
Let $q > t\geq s\geq 1$ and $k > 1$. Then $f_{s,t}(q, k)$ denotes
the maximum $n$ such that there exists a $(q,k,n)_{s,t}$-Armstrong
code.
\end{definition}

As with the Armstrong codes for functional dependencies \cite{Katonaetal:2008},
we have the following
restrictions on $(q,k,n)_{s,t}$-Armstrong codes. Let $\phi$ be the
least number of submultisets  $S\subset M$ of size $t+1$ with at
most $s$ distinct elements, where $M$ ranges over all multisets of
size $m$ over $[q]$.

\begin{proposition}\label{condition1}
 Let $C$ be a $(q, k, n)_{s,t}$-Armstrong code and let $m=|C|$. Then $\binom{m}{t+1}\geq
\binom{n}{k-1}$ and $n\cdot \phi \leq (k-1)\binom{m}{t+1}$.
\end{proposition}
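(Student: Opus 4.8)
The plan is to establish the two inequalities independently by counting arguments that mirror the structure of the two defining properties of a $(q,k,n)_{s,t}$-Armstrong code. Both bounds should follow from an injection and a double-counting argument, respectively, so the main work is setting up the correct combinatorial objects to count.

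\textbf{First inequality.} For the bound $\binom{m}{t+1}\geq \binom{n}{k-1}$, I would argue via an injection from the set of $(k-1)$-subsets of columns into the set of $(t+1)$-subsets of rows. Property (ii) of the code guarantees that for every $(k-1)$-subset $D$ of columns there exist $t+1$ rows witnessing the required bounded-distinctness condition, so I fix a choice function assigning to each such $D$ one such $(t+1)$-set of rows $\rho(D)$. The key step is to show this assignment is injective. Suppose $\rho(D)=\rho(D')=S$ for two distinct $(k-1)$-subsets $D\neq D'$. Then the $t+1$ rows in $S$ have at most $s$ distinct elements in every column of $D\cup D'$; since $|D\cup D'|\geq k$ (as $D\neq D'$ are distinct $(k-1)$-sets), this produces $k$ columns each with at most $s$ distinct elements among the same $t+1$ rows, contradicting property (i), which forbids more than $k-1$ such columns for any fixed $t+1$ rows. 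Hence $\rho$ is injective and the inequality follows immediately by comparing cardinalities.

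\textbf{Second inequality.} For $n\cdot\phi\leq (k-1)\binom{m}{t+1}$, I would count incidences between $(t+1)$-subsets of rows and columns in which the restricted submultiset has at most $s$ distinct elements. View each column as a length-$m$ word over $[q]$, i.e.\ a multiset of size $m$; by the definition of $\phi$, every column contributes \emph{at least} $\phi$ submultisets of size $t+1$ with at most $s$ distinct elements. Summing over all $n$ columns gives a total incidence count of at least $n\cdot\phi$. On the other hand, for each fixed $(t+1)$-subset of rows, property (i) says at most $k-1$ columns can have at most $s$ distinct elements restricted to those rows, so each of the $\binom{m}{t+1}$ row-sets contributes at most $k-1$ incidences, giving an upper bound of $(k-1)\binom{m}{t+1}$. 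Comparing the two bounds on the same incidence count yields the claim.

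\textbf{Main obstacle.} The delicate point is the definition and use of $\phi$: it is defined as the \emph{least} number of size-$(t+1)$ submultisets with at most $s$ distinct elements, minimized over all multisets of size $m$ over $[q]$. I must verify that each individual column, regarded as a multiset of size $m$, genuinely realizes at least this minimum $\phi$ — which is immediate from the definition of $\phi$ as a minimum over all such multisets, so every column attains at least $\phi$ such submultisets. The subtle care required is ensuring that the incidence counting is consistent: the quantity being bounded is the same in both directions, namely the number of pairs (column, $(t+1)$-subset of rows) for which that column restricted to those rows has at most $s$ distinct values. Once this object is fixed unambiguously, both bounds are routine; the only real content is recognizing that property (i) controls the row-indexed count from above while $\phi$ controls the column-indexed count from below.
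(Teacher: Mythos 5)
Your proposal is correct and follows essentially the same approach as the paper: the first inequality via the injection $T\mapsto R_T$ given by property (ii), whose injectivity is forced by property (i), and the second via double-counting incidences between columns and $(t+1)$-subsets of rows, bounded below by $n\cdot\phi$ and above by $(k-1)\binom{m}{t+1}$. The paper states this very tersely; your write-up simply makes the injectivity argument and the incidence count explicit.
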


\begin{proof} Let $T$ be a set of $k-1$ columns of $C$. By
condition (ii), there exists a set $R_T$ of $t+1$ rows such that
each column of $T$ has at most $s$ distinct elements in $R_T$. By
the first defining condition (i) of a $(q,k,n)_{s,t}$-Armstrong
code, $R_T$ is distinct for distinct $T$. The first inequality then
follows. The second inequality holds by the definition of $\phi$ and
the defining condition (i).
\end{proof}
\vskip 10pt

As in \cite{Katonaetal:2008}, the two inequalities in
Proposition~\ref{condition1} can give two upper bounds of
$f_{s,t}(q, k)$, where one is obviously increasing in $m$ and the other
could be proved to be decreasing in $m$. Thus there is a universal
upper bound of $f_{s,t}(q, k)$ at certain $m$ where the two upper
bounds intersect. However, it is impossible to give an explicit
universal upper bound in most cases. We will use this method
to explore  values of $f_{s,t}(q, k)$ for some special cases.

\subsection{The Case $s=1$ and $k=2$}

\begin{proposition}\label{condition2}
When $s=1$ and $q<m$, we have
\begin{align*}
\phi&\geq
r\binom{h+1}{t+1}+(q-r)\binom{h}{t+1}=q\binom{h}{t+1}+r\binom{h}{t},
\end{align*}
 where $m=qh+r$,
with $0\leq r<q$.
\end{proposition}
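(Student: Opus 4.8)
The plan is to recast $\phi$ as a purely combinatorial optimization problem and then solve it by a convexity (smoothing) argument. Since $s=1$, a size-$(t+1)$ submultiset with at most one distinct element is simply a choice of $t+1$ entries all carrying the same symbol. If a multiset $M$ of size $m$ over $[q]$ has $m_i$ copies of symbol $i$, for $i\in[q]$, then the number of such submultisets is $\sum_{i=1}^{q}\binom{m_i}{t+1}$ (choosing $t+1$ of the positions holding symbol $i$). Hence
\begin{equation*}
\phi=\min\left\{\sum_{i=1}^{q}\binom{m_i}{t+1}: m_i\geq 0,\ \sum_{i=1}^{q}m_i=m\right\},
\end{equation*}
and the task reduces to showing that this minimum is attained at the \emph{balanced} composition, in which $r$ of the symbols have multiplicity $h+1$ and the remaining $q-r$ have multiplicity $h$ (note $r(h+1)+(q-r)h=qh+r=m$).

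Next I would establish the smoothing step, which is the heart of the argument. Let $(m_1,\dots,m_q)$ be any composition of $m$ and suppose two parts satisfy $m_i\geq m_j+2$. Replacing $(m_i,m_j)$ by $(m_i-1,m_j+1)$ changes the objective by
\begin{equation*}
\Big(\tbinom{m_i-1}{t+1}-\tbinom{m_i}{t+1}\Big)+\Big(\tbinom{m_j+1}{t+1}-\tbinom{m_j}{t+1}\Big)=\binom{m_j}{t}-\binom{m_i-1}{t},
\end{equation*}
where I used the Pascal identity $\binom{n}{t+1}-\binom{n-1}{t+1}=\binom{n-1}{t}$. Because $m_i-1\geq m_j+1>m_j$ and $\binom{\cdot}{t}$ is non-decreasing on the non-negative integers, this change is $\leq 0$, so the objective does not increase. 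Repeatedly applying this move to any pair of parts differing by at least $2$ drives every composition, without ever increasing $\sum_i\binom{m_i}{t+1}$, to the (unique up to permutation) composition in which all parts differ by at most $1$, namely the balanced one. This shows the balanced composition is a minimizer, hence $\phi=r\binom{h+1}{t+1}+(q-r)\binom{h}{t+1}$, which in particular proves the stated lower bound.

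Finally, I would verify the closed form on the right of the asserted equality. Applying $\binom{h+1}{t+1}=\binom{h}{t+1}+\binom{h}{t}$ gives
\begin{equation*}
r\binom{h+1}{t+1}+(q-r)\binom{h}{t+1}=r\Big(\tbinom{h}{t+1}+\tbinom{h}{t}\Big)+(q-r)\binom{h}{t+1}=q\binom{h}{t+1}+r\binom{h}{t},
\end{equation*}
as required. I expect the main obstacle to be the smoothing step, and in particular getting the sign of the exchange right and invoking monotonicity of $\binom{\cdot}{t}$ cleanly; everything else is routine. I would also remark that the hypothesis $q<m$ guarantees $h\geq 1$ (so the balanced profile is the genuinely interesting one), though the convexity argument itself is valid without it, with the degenerate cases $h=0$ giving $\phi=0$ consistently with the formula since $\binom{0}{t}=0$ for $t\geq 1$.
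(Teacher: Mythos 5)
Your proposal is correct and takes essentially the same approach as the paper: the paper's proof also minimizes $\sum_i\binom{m_i}{t+1}$ over symbol multiplicities via the exchange inequality $\binom{m_1}{t+1}+\binom{m_2}{t+1}\geq\binom{m_1+1}{t+1}+\binom{m_2-1}{t+1}$ for $m_2-m_1\geq 2$, which is precisely your Pascal-identity smoothing step. You merely spell out the details (the Pascal computation, monotonicity of $\binom{\cdot}{t}$, and convergence to the balanced composition) that the paper compresses into one sentence with a citation to Katona et al.
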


\begin{proof}
Similar to the proof in \cite[Lemma 3.2]{Katonaetal:2008}, let $m_1$
and $m_2$ be the number of two distinct symbols in $M$,  where $M$
is a multiset of size $m$ over $[q]$. The inequality follows by the
fact that $\binom{m_1}{t+1}+\binom{m_2}{t+1}\geq
\binom{m_1+1}{t+1}+\binom{m_2-1}{t+1}$ for all $m_1$ and $m_2$
satisfying $m_2-m_1\geq 2$.
\end{proof}

\begin{proposition}\label{decreasing}
The function
$g(m)=\frac{(k-1)\binom{m}{t+1}}{q\binom{h}{t+1}+r\binom{h}{t}}$ is decreasing in $m$, where $h$ and $r$ are functions of $m$ such that $m=qh+r$ with
 $0<r+1\leq q<m$.
\end{proposition}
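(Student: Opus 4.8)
The plan is to treat $m$ as an integer variable and establish the forward‑difference inequality $g(m+1)\le g(m)$ directly, writing $g(m)=N(m)/D(m)$ with numerator $N(m)=(k-1)\binom{m}{t+1}$ and denominator $D(m)=q\binom{h}{t+1}+r\binom{h}{t}$, where $m=qh+r$, $0\le r\le q-1$. The first step is to compute the forward difference of the denominator. As $m$ increases by one, either $r$ increases by one with $h$ unchanged (when $r<q-1$), or $r$ resets to $0$ and $h$ increases by one (when $r=q-1$). Using Pascal's identity $\binom{h+1}{t+1}-\binom{h}{t+1}=\binom{h}{t}$, a short calculation shows that both cases collapse to the \emph{same} value $\Delta D:=D(m+1)-D(m)=\binom{h}{t}$; likewise $\Delta N:=N(m+1)-N(m)=(k-1)\binom{m}{t}$.

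The second step is a mediant argument. On the range where $g$ is defined one has $D(m),D(m+1)>0$, so $g(m+1)=\tfrac{N(m)+\Delta N}{D(m)+\Delta D}$ is the mediant of $g(m)$ and $\Delta N/\Delta D$, whence $g(m+1)\le g(m)$ holds precisely when $\Delta N/\Delta D\le g(m)$. After clearing the common factor $k-1$, this is equivalent to the inequality $\binom{m}{t}\,D(m)\le \binom{m}{t+1}\binom{h}{t}$.

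The third step reduces this to something elementary. Substituting $\binom{m}{t+1}=\binom{m}{t}\frac{m-t}{t+1}$ and $\binom{h}{t+1}=\binom{h}{t}\frac{h-t}{t+1}$ and cancelling the positive factor $\binom{m}{t}\binom{h}{t}$ (note $D(m)>0$ forces $h\ge t$, so $\binom{h}{t}>0$, while $m>q>t$ gives $\binom{m}{t}>0$), the inequality becomes $q\frac{h-t}{t+1}+r\le\frac{m-t}{t+1}$. Clearing the denominator $t+1$ and using $m=qh+r$, everything cancels down to $t(q-r)\ge t$, that is $r\le q-1$, which holds by the hypothesis $r+1\le q$. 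Thus $g(m+1)\le g(m)$ always, proving the claim.

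I expect the main obstacle to be conceptual rather than computational: because $g$ depends on $m$ through the pair $(h,r)$ in a piecewise fashion, one cannot appeal to a continuous derivative and must instead verify the finite difference, carefully treating the two incrementation regimes. The fortunate feature that makes the argument clean is that $\Delta D$ is independent of which regime occurs. I should also flag two points explicitly: the reduced inequality is an equality exactly when $r=q-1$, so ``decreasing'' must be read as non‑increasing (with plateaus of length two at each carry, as one checks on small cases); and the cancellation tacitly requires $D(m)>0$, i.e. $\phi>0$, which holds once $m$ is large enough that some symbol must occur at least $t+1$ times. This weak monotonicity is exactly what is needed, since $g$ serves only to locate the crossing point of the two upper bounds of Proposition~\ref{condition1}.
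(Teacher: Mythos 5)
Your proof is correct, and it establishes the same forward-difference inequality $g(m)\geq g(m+1)$ that the paper verifies, but by a genuinely different organization. The paper splits into the two incrementation regimes and cross-multiplies directly in each: for $r+1<q$ the cancellations lead to $q-r-1\geq 0$, and for $r+1=q$ it separately verifies the equality $g(m)=g(m+1)$ with denominator $q\binom{h+1}{t+1}$. You instead notice, via Pascal's identity, that the forward difference of the denominator equals $\binom{h}{t}$ in \emph{both} regimes (and that of the numerator is $(k-1)\binom{m}{t}$), so the case split disappears; the mediant inequality then reduces monotonicity to the single comparison $\binom{m}{t}\bigl(q\binom{h}{t+1}+r\binom{h}{t}\bigr)\leq \binom{m}{t+1}\binom{h}{t}$, which collapses to $t(q-r)\geq t$, i.e.\ $r\leq q-1$. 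What your route buys: one unified case, hand-checkable algebra, and the exact equality locus $r=q-1$, so the paper's second case (the plateau $g(m)=g(m+1)$ at each carry) comes out as a byproduct rather than a separate verification; what the paper's route buys is only brevity. Your two flagged caveats are also sound and are left tacit in the paper: ``decreasing'' must indeed be read as non-increasing (the paper itself exhibits $g(m)=g(m+1)$ when $r+1=q$), and the cross-multiplication requires a positive denominator, equivalently $m\geq qt+1$; this is harmless because $g$ is only invoked at and beyond the intersection point $m=qt+1$ in the proof of Proposition~\ref{1v}.
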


\begin{proof} We prove that $g(m)\geq g(m+1)$ in two cases.
When $r+1<q$, $m+1=qh+(r+1)$. We
have to verify that
\begin{align*}
\frac{(k-1)\binom{m}{t+1}}{q\binom{h}{t+1}+r\binom{h}{t}}\geq
\frac{(k-1)\binom{m+1}{t+1}}{q\binom{h}{t+1}+(r+1)\binom{h}{t}}.
\end{align*} After carrying out the obvious cancelations, this leads
to $q-r-1\geq 0$ which is trivially true. When $r+1=q$,
$m+1=q(h+1)$, it is easy to check that \begin{align*}
\frac{(k-1)\binom{m}{t+1}}{q\binom{h}{t+1}+r\binom{h}{t}}=
\frac{(k-1)\binom{m+1}{t+1}}{q\binom{h+1}{t+1}},
\end{align*}
i.e., $g(m)=g(m+1)$.
\end{proof}

\begin{proposition}\label{1v}
 $f_{1,t}(q,2)=\binom{qt+1}{t+1}$.
\end{proposition}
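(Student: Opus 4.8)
The plan is to prove the equality by establishing matching upper and lower bounds. For the upper bound $f_{1,t}(q,2)\le\binom{qt+1}{t+1}$, I would run the general strategy sketched after Proposition~\ref{condition1}: a $(q,2,n)_{1,t}$-Armstrong code of size $m$ must satisfy \emph{both} $n\le\binom{m}{t+1}$ and $n\le g(m)=\binom{m}{t+1}/\phi$ (taking $k-1=1$), where $\phi$ is evaluated by Proposition~\ref{condition2}. Writing $m=qh+r$ with $0\le r<q$, the value $m=qt+1$ gives $h=t$, $r=1$, so $\phi=q\binom{t}{t+1}+\binom{t}{t}=1$ and both bounds coincide at $\binom{qt+1}{t+1}$. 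Since $\binom{m}{t+1}$ is increasing in $m$ while $g(m)$ is decreasing by Proposition~\ref{decreasing}, the quantity $\min\{\binom{m}{t+1},g(m)\}$ is maximized at the crossover $m=qt+1$: for $m\le qt+1$ the first bound controls and for $m\ge qt+1$ the second does, so in every case $n\le\binom{qt+1}{t+1}$. (When $m<qt+1$ one has $\phi=0$ and the $g$-bound is vacuous, but then already $\binom{m}{t+1}<\binom{qt+1}{t+1}$.)

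For the lower bound I would construct a code with $m=qt+1$ rows over $[q]$ and $n=\binom{qt+1}{t+1}$ columns, one column $c_S$ for each $(t+1)$-subset $S\subseteq[m]$. Since $s=1$, ``at most one distinct element'' means ``monochromatic,'' so each column is best viewed as a partition of the rows into color classes. I take $c_S$ to be the partition whose unique large class is $S$ (of size $t+1$, receiving one color) together with $q-1$ small classes of size $t$ covering the remaining $t(q-1)$ rows (one color each); note $(t+1)+(q-1)t=qt+1=m$, and $q>t$ leaves enough colors. Condition~(i) is then immediate: every small class has size $t<t+1$, so the only $(t+1)$-subset contained in a single class of $c_{S'}$ is $S'$ itself, whence each $(t+1)$-subset is monochromatic in exactly one column.

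The heart of the argument is condition~(ii): for the column $c_S$ the distinguished rows are $S$ (monochromatic there), and I must exhibit a \emph{second} column having $t+1$ distinct symbols on $S$, i.e.\ a partition $c_{S'}$ in which the elements of $S$ lie in $t+1$ distinct classes (a rainbow partial transversal). For $t=1$ this is automatic, since the small classes are singletons and any $c_{S'}$ with $S'\ne S$ separates the two rows of $S$; this recovers the classical $f(q,2)=\binom{q+1}{2}$. For $t>1$ the small classes have size $t$ and may accidentally merge rows of $S$, so the freedom in choosing the small classes must be exploited carefully --- this is the step I expect to be the main obstacle.

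I would resolve it by requiring, for each target $S$, a witness $S'$ with $|S\cap S'|=1$, so that exactly one row of $S$ is forced into the large color and the other $t$ rows need only be placed in distinct small classes; this is possible because $q-1\ge t$. The remaining difficulty is to fix \emph{all} small classes simultaneously so that every $(t+1)$-subset is witnessed. I would aim first for an explicit algebraic assignment on a structured point set of size $qt+1$; failing a clean closed form, existence follows probabilistically, choosing the small classes of each column independently at random. A fixed $S$ fails to be rainbow in a given admissible $c_{S'}$ with probability bounded away from $1$, and the number of admissible witnesses (those with $|S\cap S'|=1$) is $(t+1)\binom{t(q-1)}{t}$, which grows like $(qt)^{t}$ and dominates $\log\binom{qt+1}{t+1}$; hence a union bound over all $(t+1)$-subsets shows that with positive probability every $S$ is witnessed. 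Either route yields a valid $(q,2,\binom{qt+1}{t+1})_{1,t}$-Armstrong code, and combined with the upper bound gives $f_{1,t}(q,2)=\binom{qt+1}{t+1}$.
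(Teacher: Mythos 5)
Your proof is correct, and most of it coincides with the paper's: your upper bound (the crossover of the increasing bound $n\le\binom{m}{t+1}$ with the decreasing bound from Proposition~\ref{decreasing}, meeting at $m=qt+1$) and your base array (one column per $(t+1)$-subset $S$, monochromatic on $S$, with $q-1$ further classes of size $t$) are exactly what the paper does, including the verification of condition (i). Where you genuinely diverge is the repair step for condition (ii). The paper handles it deterministically: it forms a bipartite graph on two copies $A,A'$ of the set of column indices, joining $v\in A$ to $v'\in A'$ whenever $|v\cap v'|\le 1$; this graph is regular, hence has a perfect matching, and for each matched pair $\{v,v'\}$ the small classes of column $v'$ are rearranged so that $v$ becomes rainbow there. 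The matching is precisely what ensures each column is rearranged on behalf of exactly one target set, so the rearrangements never conflict, and no estimates are needed. You replace this with independently random small classes plus a union bound, which is a valid alternative but carries two caveats. First, the quantity that must dominate $\log\binom{qt+1}{t+1}$ is not the number $N=(t+1)\binom{t(q-1)}{t}$ of admissible witnesses but the product $p_0N$, where $p_0=\prod_{i=1}^{t-1}\frac{t(q-1-i)}{t(q-1)-i}$ is the probability that one admissible column separates $S$; in the extreme case $q=t+1$ this $p_0$ behaves like $(t-1)!/t^{t-1}\sim e^{-t}$ and is \emph{not} bounded away from zero, so the inequality $p_0N>\log\binom{qt+1}{t+1}$, while true for all $q>t\ge1$, must actually be verified rather than inferred from the growth of $N$ alone. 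Second, your insistence on witnesses with $|S\cap S'|=1$ exactly (rather than $\le 1$) is not just convenient but necessary when $q=t+1$: a column whose monochromatic class is disjoint from $S$ has only $q-1=t$ small classes and cannot be rainbow on the $t+1$ rows of $S$, by pigeonhole. (Incidentally, the paper's matching step permits $|v\cap v'|=0$ and thus implicitly assumes $q\ge t+2$ at that point; restricting its bipartite graph to pairs with $|v\cap v'|=1$, which is still regular, repairs this, and your formulation already avoids the issue.) Net comparison: the paper's matching argument is shorter, deterministic, and parameter-free; your probabilistic route dispenses with Hall's theorem but pays for it with quantitative estimates that must be checked down to the smallest parameters.
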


\begin{proof} By Proposition~\ref{condition1}, we have  $f_{1,t}(q,2)\leq \binom{m}{t+1}$ and   $f_{1,t}(q,2)\leq \frac{\binom{m}{t+1}}{q\binom{h}{t+1}+r\binom{h}{t}}$,  where $m=qh+r$,
with $0\leq r<q$. Since $\binom{m}{t+1}$ is increasing in $m$ and $\frac{\binom{m}{t+1}}{q\binom{h}{t+1}+r\binom{h}{t}}$ is decreasing in $m$ by Proposition~\ref{decreasing}, the upper bound  $f_{1,t}(q,2)\leq\binom{qt+1}{t+1}$ is the universal upper bound obtained by setting $m=qt+1$ (i.e., $h=t$ and $r=1$) where the two upper bounds intersect. The lower bound is given by construction. Construct a
 $(qt+1) \times \binom{qt+1}{t+1}$ array as follows. For each
column, we have exactly one subset of $t+1$ rows with equal symbols
and all other $q-1$ symbols occurring exactly $t$ times. We do so
such that each column has a distinct subset of $t+1$ rows with equal
symbols. It is clear that this array satisfies the first property of
a $(q, 2,\binom{qt+1}{t+1})_{1,t}$-Armstrong code.

For the second property, any column of the array has $t+1$ rows with
equal symbols. Now for these $t+1$ rows, we need a column having
$t+1$ distinct symbols in these rows. The above array does not have
this property obviously. However, we can slightly rearrange the
symbols occurring $t$ times in each column to satisfy this property.
We do it as follows. Let each column be indexed by the
$(t+1)$-subset of rows which have equal symbols. Let $A$ denote the
set of all the indices and $A'$ be a copy of $A$. Define a bipartite
graph with two parts $A$ and $A'$, two $(t+1)$-subsets are adjacent
if and only if they intersect at most one common symbol. This is a
regular bipartite graph, thus it has a perfect matching $E$. Now for
each edge $\{v,v'\}\in E$, where $v\in A$ and $v'\in A'$, rearrange
the symbols occurring $t$ times in the column $v'$, such that
symbols in the rows of $v$ are all distinct. We can do this since
$|v\cap v'|\leq 1$. This rearrangement will guarantee that for each
$t+1$ rows there is a column having equal symbols and simultaneously
a column having $t+1$ distinct symbols.
\end{proof}

\subsection{The Case $s=t=2$ and $k=4$}

\begin{proposition}\label{condition3}
When $s=t=2$ and $q<m$, we have $\phi\geq
\varphi(m)$, where
\begin{align*}
\varphi(m)= &
r\binom{h+1}{3}+(q-r)\binom{h}{3}+r\binom{h+1}{2}(m-h-1)\\&+(q-r)\binom{h}{2}(m-h).
\end{align*}
Here $h$ and $r$ are functions of $m$ such that $m=qh+r$,
with $0\leq r<q$.
\end{proposition}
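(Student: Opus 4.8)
The plan is to compute the inner count defining $\phi$ explicitly as a sum over symbol multiplicities, and then run a smoothing (compression) argument showing that the balanced multiplicity vector minimizes it, exactly in the spirit of Proposition~\ref{condition2}, but with an extra twist needed to handle a per-symbol function that is no longer globally convex.

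First I would fix a multiset $M$ of size $m$ over $[q]$ and let $m_i$ be the multiplicity of symbol $i$, so that $\sum_{i=1}^q m_i = m$. A size-$3$ submultiset with at most two distinct elements is either three copies of a single symbol or two copies of one symbol together with one copy of another; counting these gives
\begin{equation*}
N(m_1,\dots,m_q)=\sum_{i=1}^q\left[\binom{m_i}{3}+\binom{m_i}{2}(m-m_i)\right].
\end{equation*}
Evaluating $N$ at the balanced vector in which $r$ of the $m_i$ equal $h+1$ and the remaining $q-r$ equal $h$ (where $m=qh+r$) reproduces $\varphi(m)$ term by term. Hence it suffices to prove that this balanced vector minimizes $N$, which yields $\phi\geq\varphi(m)$.

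The core step is a local exchange. Writing $g(x)=\binom{x}{3}+\binom{x}{2}(m-x)$, I would show that whenever two multiplicities satisfy $m_i\geq m_j+2$, the replacement $(m_j,m_i)\mapsto(m_j+1,m_i-1)$ does not increase $N$; since iterating such exchanges drives any multiplicity vector to the balanced one, the claim follows. A direct expansion of $g(m_j)+g(m_i)-g(m_j+1)-g(m_i-1)$, conveniently organized through the second-difference identity $g(x+1)-2g(x)+g(x-1)=m-2x$, factors as
\begin{equation*}
g(m_j)+g(m_i)-g(m_j+1)-g(m_i-1)=(m_i-m_j-1)\,(m-m_i-m_j).
\end{equation*}
The first factor is at least $1$ because $m_i-m_j\geq 2$, and the second factor is nonnegative because the multiplicities are nonnegative and sum to $m$, so $m_i+m_j\leq m$. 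Thus each exchange weakly decreases $N$, and iterating to the (permutation-unique) balanced configuration gives $N\geq\varphi(m)$ for every multiset, i.e.\ $\phi\geq\varphi(m)$.

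The main obstacle is precisely that $g$ is a cubic which is convex only for $x<m/2$ and concave beyond, so the elementary convexity argument that sufficed for the $s=1$ case, where the per-symbol contribution $\binom{x}{t+1}$ is globally convex, no longer applies; a careless smoothing step could in principle raise $N$ in the concave regime. What rescues the argument is the clean factorization above together with the global constraint $m_i+m_j\leq m$, which forces the potentially negative factor to remain nonnegative. Checking that factorization and verifying that the balanced evaluation matches $\varphi(m)$ are the only genuinely computational points, and both are routine once the setup is in place.
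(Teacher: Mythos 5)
Your proposal is correct and takes essentially the same route as the paper's proof, which is precisely this smoothing argument: the paper asserts the exchange inequality $g(m_1)+g(m_2)\geq g(m_1+1)+g(m_2-1)$ for $m_2-m_1\geq 2$ (written out in binomial form) and implicitly balances the multiplicity vector to reach $\varphi(m)$. Your explicit factorization $(m_i-m_j-1)(m-m_i-m_j)$ supplies the verification the paper omits, and your remark about needing $m_i+m_j\leq m$ is well taken, since the paper's claim that the inequality holds ``for all $m_1$ and $m_2$'' with $m_2-m_1\geq2$ is literally false in the concave regime and is rescued only by that constraint.
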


\begin{proof}
Let $m_1$ and $m_2$ be the number of two distinct symbols in $M$,
where $M$ is a multiset of size $m$ over $[q]$. The inequality
follows by the fact that
$\binom{m_1}{3}+\binom{m_2}{3}+\binom{m_1}{2}(m-m_1)+\binom{m_2}{2}(m-m_2)\geq
\binom{m_1+1}{3}+\binom{m_2-1}{3}+\binom{m_1+1}{2}(m-m_1-1)+\binom{m_2-1}{2}(m-m_2+1)$
for all $m_1$ and $m_2$ satisfying $m_2-m_1\geq 2$.
\end{proof}

\begin{proposition}\label{decreasing1}
The function
$k(m)=\frac{(k-1)\binom{m}{3}}{\varphi(m)}$ is decreasing in $m$,  where $\varphi(m)$ is defined above.
\end{proposition}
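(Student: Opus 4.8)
The factor $k-1$ is a positive constant, so it cancels and it suffices to show that $\binom{m}{3}/\varphi(m)$ is nonincreasing; that is, the inequality $k(m)\ge k(m+1)$ reduces to
\begin{equation*}
\binom{m}{3}\,\varphi(m+1)\ \ge\ \binom{m+1}{3}\,\varphi(m).
\end{equation*}
Since $\binom{m+1}{3}=\frac{m+1}{m-2}\binom{m}{3}$ (valid as $m>q>t=2$), this is equivalent to the cleaner inequality
\begin{equation*}
(m-2)\,\varphi(m+1)\ \ge\ (m+1)\,\varphi(m),
\end{equation*}
which is what I would establish. As in Proposition~\ref{decreasing}, I would then split according to how the representation $m=qh+r$ changes when $m$ increases by one.

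In the first case $r+1<q$, so that $m+1=qh+(r+1)$ keeps the quotient $h$ and increments the remainder to $r+1$. Both $\varphi(m)$ and $\varphi(m+1)$ are then given by the formula of Proposition~\ref{condition3} with the same $h$, so $(m-2)\varphi(m+1)-(m+1)\varphi(m)$ is an explicit polynomial in $m$, $h$, and $r$, where the binomials $\binom{h}{2}$, $\binom{h}{3}$, $\binom{h+1}{2}$, $\binom{h+1}{3}$ are constants for fixed $h$. I would expand, collect terms, and use $m=qh+r$; I expect the result to reduce, after the obvious cancellations, to a manifestly nonnegative multiple of $q-r-1\ge0$, exactly paralleling the reduction to $q-r-1\ge0$ in Proposition~\ref{decreasing}.

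In the second case $r+1=q$, so $m+1=q(h+1)$: now $\varphi(m+1)$ must be evaluated with quotient $h+1$ and remainder $0$, giving $\varphi(m+1)=q\binom{h+1}{3}+q\binom{h+1}{2}(m-h)$, while $\varphi(m)$ still uses $h$ and $r=q-1$. Substituting these and using $m=q(h+1)-1$ turns $(m-2)\varphi(m+1)-(m+1)\varphi(m)$ into a one-variable polynomial in $h$, which I expect to vanish identically, giving $k(m)=k(m+1)$ at this balanced transition, just as $g(m)=g(m+1)$ occurs in Proposition~\ref{decreasing}; at worst it is nonnegative.

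The main obstacle is purely the bookkeeping in the first case. Unlike the $s=1$ situation treated in Proposition~\ref{decreasing}, here $\varphi$ carries the two additional ``$2+1$'' terms $\binom{h+1}{2}(m-h-1)$ and $\binom{h}{2}(m-h)$ whose explicit $m$-dependence couples to the outer factors $(m-2)$ and $(m+1)$, so the cancellation is less immediate and the polynomial must be organized carefully—most likely by grouping the pure $\binom{\cdot}{3}$ contributions separately from the $\binom{\cdot}{2}(\cdot)$ contributions—to expose the nonnegative factor. I do not anticipate any conceptual difficulty beyond verifying that this reduction indeed lands on $q-r-1\ge0$.
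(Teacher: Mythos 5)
Your plan follows essentially the same route as the paper's proof: the same case split according to whether $m+1=qh+(r+1)$ (i.e.\ $r+1<q$) or $m+1=q(h+1)$, the same cross-multiplication and cancellation, and the same expected outcomes in each case. The paper carries out the algebra by Maple and confirms exactly what you anticipate: for $r+1<q$ the inequality reduces to $(q-r-1)(qh+r-2h)\geq 0$, a nonnegative multiple of $q-r-1$, and for $r+1=q$ one gets the exact equality $k(m)=k(m+1)$.
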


\begin{proof} As in the proof of Proposition~\ref{decreasing}, we
first verify when $m=qh+r$ and $0<r+1<q$ that
\begin{align*}
\frac{(k-1)\binom{m}{3}}{\varphi(m)}\geq
\frac{(k-1)\binom{m+1}{3}}{\varphi(m+1)}.
\end{align*}
Here $m+1=qh+(r+1)$. 
Since there are a large amount of computation, we use Maple to do
the cancelations. This leads to $(q-r-1)(qh+r-2h)\geq 0$ which is
true since $q\geq r+2\geq 2$. If $r+1=q$, then $m=qh+q-1$,
$m+1=q(h+1)$ and
\begin{align*} \varphi(m+1)= &
q\binom{h+1}{3}+q\binom{h+1}{2}(m-h).
\end{align*} We can also check by Maple that
\begin{align*}
\frac{(k-1)\binom{m}{3}}{\varphi(m)}= \frac{(k-1)\binom{m+1}{3}}{\varphi(m+1)},
\end{align*}
i.e., $k(m)=k(m+1)$.
\end{proof}
\vskip 10pt

When $k=4$, we have $f_{2,2}(q,4)\leq m$ and $f_{2,2}(q,4)\leq k(m)$ by Propositions~\ref{condition1} and~\ref{condition3}.
Since $k(m)$ is decreasing by Proposition~\ref{decreasing1}, we
know that the universal upper bound is $f_{2,2}(q,4)\leq m$ when $m=k(m)$. The
solution is $m=2q-1$, which is achieved when $h=1$ and $r=q-1$.
Hence, we have the following upper bound for $f_{2,2}(q,4)$.

\begin{proposition}\label{f22}
$f_{2,2}(q,4)\leq 2q-1$.
\end{proposition}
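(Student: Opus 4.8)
The plan is to combine the two upper bounds furnished by Proposition~\ref{condition1} in the case $s=t=2$, $k=4$, and to locate the value of $m$ at which they cross. Specializing Proposition~\ref{condition1} with $k-1=3$ and $t+1=3$, the first inequality $\binom{m}{3}\geq\binom{n}{3}$ yields the increasing bound $f_{2,2}(q,4)\leq m$, while the second inequality $n\cdot\phi\leq 3\binom{m}{3}$, combined with the estimate $\phi\geq\varphi(m)$ of Proposition~\ref{condition3} (valid whenever $q<m$), yields the decreasing bound $f_{2,2}(q,4)\leq k(m)=3\binom{m}{3}/\varphi(m)$.

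The key observation is that these two bounds run in opposite directions: $m$ is trivially increasing, and $k(m)$ is decreasing by Proposition~\ref{decreasing1}. Hence $\min(m,k(m))$, which dominates $f_{2,2}(q,4)$ for every admissible size $m$ of a code, is largest at the crossing point $m^\ast$ where $m^\ast=k(m^\ast)$. I would therefore first verify by direct substitution that $m^\ast=2q-1$ solves this equation: setting $h=1$ and $r=q-1$ (so that $m=qh+r=2q-1$), the terms of $\varphi$ carrying $\binom{h}{3}$, $\binom{h+1}{3}$, and $\binom{h}{2}$ all vanish, leaving $\varphi(2q-1)=(q-1)(2q-3)$, after which a short cancellation in $k(2q-1)=3\binom{2q-1}{3}/\big((q-1)(2q-3)\big)$ returns exactly $2q-1$.

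With the crossing located, the conclusion follows from a short case split on the (a priori unknown) size $m$ of an optimal code. If $m\leq 2q-1$, then the elementary bound alone gives $f_{2,2}(q,4)\leq m\leq 2q-1$. If instead $m>2q-1$, then in particular $q<m$, so Proposition~\ref{condition3} applies; since $k$ is decreasing we obtain $f_{2,2}(q,4)\leq k(m)\leq k(2q-1)=2q-1$. In either regime the bound $2q-1$ holds, which is the assertion.

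The only genuinely delicate point is the monotonicity bookkeeping: one must check that the hypothesis $q<m$ required by Proposition~\ref{condition3} is in force precisely in the range $m>2q-1$ where the decreasing bound is used, while the unconditional bound $f_{2,2}(q,4)\leq m$ covers the small-$m$ range without any side condition. I expect the arithmetic identity $k(2q-1)=2q-1$ to be the main computational step, but it is entirely routine once $h=1$ and $r=q-1$ are substituted, so I anticipate no real obstacle beyond keeping the two monotonicity directions straight.
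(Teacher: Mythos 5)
Your proposal is correct and takes essentially the same approach as the paper: both combine the increasing bound $f_{2,2}(q,4)\leq m$ and the decreasing bound $f_{2,2}(q,4)\leq k(m)$ from Propositions~\ref{condition1}, \ref{condition3} and~\ref{decreasing1}, and identify the crossing point $m=k(m)$ at $m=2q-1$ (i.e., $h=1$, $r=q-1$). Your write-up is in fact somewhat more explicit than the paper's, since you verify the identity $k(2q-1)=2q-1$ and spell out the case split on $m$, steps the paper leaves implicit.
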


Next, we will give a construction of an Armstrong instance of $2q-1$
columns over $q$ symbols for $K_{2q-1}^4$ being the system of
minimal $(2,2)$-dependent keys. The construction is based on the
classical near $1$-factorization of complete graphs.


Let $n=2q-1$ and $K_n$ be a complete graph with vertex set $\bz_n$.
For each $i\in \bz_n$, take
\[T_i=\{\{t+i,-t+i\}: t\in [q-1]\},\]
 where the addition is in $\bz_n$. Then $\{T_i:i\in
\bz_n\}$ is a {\em near $1$-factorization of $K_n$}. Each $T_i$ is a
{\em near $1$-factor} which misses the point $i$. The following fact
is necessary for the construction of Armstrong code.

\vskip 10pt

\begin{proposition}
\label{factor}Let $n=2q-1$. For any distinct $i$, $j$, $k\in \bz_n$, there exist
three points $x$, $y$, $z\in \bz_n$, such that $\{x,y\}\in T_i$,
$\{y,z\}\in T_j$ and $\{z,x\}\in
T_k$. 
\end{proposition}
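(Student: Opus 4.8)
The plan is to reduce the three incidence requirements to a single linear system over $\bz_n$ and to exploit the fact that $n=2q-1$ is odd, so that $2$ is a unit modulo $n$. First I would record a convenient additive description of the near $1$-factors. By definition $T_i=\{\{t+i,-t+i\}:t\in[q-1]\}$, and each such pair $\{i+t,i-t\}$ satisfies $(i+t)+(i-t)=2i$. Conversely, any pair $\{x,y\}$ with $x\ne y$ and $x+y=2i$ arises in this way: writing $x=i+t$ forces $y=2i-x=i-t$, and since $\{i+t,i-t\}$ and $\{i-t,i+t\}$ coincide one may take $t\in[q-1]$. Hence
\[
\{x,y\}\in T_i \iff x+y=2i \ \text{ and }\ x\ne y,
\]
which turns membership in a near $1$-factor into a purely linear condition.

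With this reformulation, the three desired incidences $\{x,y\}\in T_i$, $\{y,z\}\in T_j$, $\{z,x\}\in T_k$ become the linear system $x+y=2i$, $y+z=2j$, $z+x=2k$ in $\bz_n$, subject to the nondegeneracy conditions $x\ne y$, $y\ne z$, $z\ne x$. Adding the three equations gives $2(x+y+z)=2(i+j+k)$; since $n$ is odd, $2$ is invertible in $\bz_n$, so $x+y+z=i+j+k$. Subtracting each original equation from this in turn yields the unique solution
\[
x=i+k-j,\qquad y=i+j-k,\qquad z=j+k-i.
\]
This already supplies the candidate points; it remains only to confirm that the three pairs are genuine edges, i.e.\ that $x,y,z$ are pairwise distinct.

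The one place the hypothesis enters — and the only point worth checking carefully — is this nondegeneracy. A direct computation gives $x-y=2(k-j)$, $y-z=2(i-k)$, and $z-x=2(j-i)$; because $2$ is a unit and $i,j,k$ are distinct, each of these differences is nonzero in $\bz_n$, so $x\ne y$, $y\ne z$, $z\ne x$. Consequently $\{x,y\}\in T_i$, $\{y,z\}\in T_j$, and $\{z,x\}\in T_k$, as required. I do not expect any real obstacle here: the entire statement rests on the invertibility of $2$ modulo the odd integer $n=2q-1$, which simultaneously guarantees the existence of the solution and its nondegeneracy.
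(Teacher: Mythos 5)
Your proposal is correct, but it takes a genuinely different and more economical route than the paper. Both arguments start from the same observation, namely that $\{x,y\}\in T_i$ precisely when $x+y=2i$ (the paper states the forward direction as a ``fact''; you also justify the converse, which is the right thing to do since you rely on it). From there the paths diverge. The paper fixes only $T_i$ and $T_j$, forms the alternating path $P$ of length $2(q-1)$ whose edges come from $T_i$ and $T_j$ in turn, considers the set $T$ of chords joining vertices at distance two along $P$, and shows that the multiset of endpoint-sums of these chords is exactly $\bz_n\setminus\{2i,2j\}$; hence for any $k\neq i,j$ some chord has endpoint-sum $2k$ and so lies in $T_k$, closing a triangle with one edge in each of $T_i$, $T_j$, $T_k$. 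You instead treat $i,j,k$ symmetrically and solve the linear system $x+y=2i$, $y+z=2j$, $z+x=2k$ outright: since $n=2q-1$ is odd, $2$ is a unit in $\bz_n$, giving the explicit (and unique) solution $x=i+k-j$, $y=i+j-k$, $z=j+k-i$, with nondegeneracy $x-y=2(k-j)\neq 0$, etc., following from the same invertibility. Your argument is shorter, yields closed-form coordinates for the triangle, and shows as a bonus that the triangle is \emph{unique} -- information the paper's proof does not make explicit. What the paper's longer route buys is structural insight: the union of two near $1$-factors is a single spanning path, a classical fact about the near $1$-factorization of $K_{2q-1}$ that is reusable in other constructions; but for the proposition as stated, that machinery is heavier than necessary.
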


\begin{proof} First note the fact that for each $i\in
\bz_n$, an edge $\{x,y\}$ belongs to $T_i$ if and only if $x+y=2i$.
Suppose $j=s+i$ for some $s\in \bz_n\setminus\{0\}$. Then $T_i$ and
$T_j$ form a path from the vertex $j$ to $i$ as follows.
\begin{align*}
P=(&\{s+i,-s+i\},\{-s+i,3s+i\},\{3s+i,-3s+i\},\\&\ldots,\{(2q-3)s+i,-(2q-3)s+i\},\\
&\{-(2q-3)s+i,(2q-1)s+i\}).
\end{align*}
The last vertex is $i$ since $(2q-1)s+i=i$. Note that the edges in
$P$ are from $T_i$ and $T_j$ in turn and the length of $P$ is
$2(q-1)$. Let $T$ be the set of edges by joining two vertices in $P$
of distance two. We claim that $T\cap T_k\neq \emptyset$ for any
$k\neq i,j$.

By the observation at the beginning, we only need to prove that
there exists an edge $\{x,y\}$  in $T$ such that $x+y=2k$. Let
$S=\{x+y: \{x,y\}\in T\}$. By the form of $P$, we have
$S=\{4ts+2i:t\in [q-1]\}\cup\{-4ts+2i: t\in [q-2]\}$. It is easy to
check that elements in $S$ are all different, i.e., $|S|=2q-3$.
Further, $2i\not\in S$ since $s\neq 0$ and $t\neq 0$. Also, $2j \not
\in S$ since $4ts\neq 2s$ for all $t\in [q-1]$ and $-4ts\neq 2s$ for
all $t\in[q-2]$. Hence, $S=\bz_n\setminus \{2i,2j\}$, or for any
$k\neq i,j$, $2k\in S$. This completes the proof.
\end{proof}

\vskip 10pt

\begin{proposition}
\label{s22o} There exists a $(q,4,2q-1)_{2,2}$-Armstrong code for
each $q\geq 3$.
\end{proposition}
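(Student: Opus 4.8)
The plan is to convert the near $1$-factorization $\{T_i:i\in\bz_n\}$ of $K_n$, $n=2q-1$, directly into the code. I would build an $n\times n$ array $C$ with both rows and columns indexed by $\bz_n$, where column $i$ encodes the partition $\mathcal{P}_i=T_i\cup\{\{i\}\}$ of $\bz_n$ consisting of the $q-1$ pairs of the near $1$-factor $T_i$ together with the singleton $\{i\}$ it misses. Since $\mathcal{P}_i$ has exactly $q$ parts, I assign the $q$ symbols of $[q]$ arbitrarily to these parts and set the entry in row $r$, column $i$ to be the symbol of the part of $\mathcal{P}_i$ containing $r$. The $(q,4,2q-1)_{2,2}$-Armstrong code will be the set of rows of $C$; each column then uses all $q$ symbols, one symbol once (the singleton) and the other $q-1$ symbols twice.

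The single observation driving both verifications is that, for three distinct rows $x,y,z$, a column $\ell$ displays at most $s=2$ distinct entries in those rows if and only if one of the three $2$-subsets $\{x,y\}$, $\{y,z\}$, $\{z,x\}$ is an edge of $T_\ell$ (two of the rows must share a part, and parts have size at most two). Because every edge of $K_n$ lies in a unique near $1$-factor (the factor $T_\ell$ with $2\ell$ equal to the edge's vertex sum) and the three $2$-subsets are pairwise distinct with distinct vertex sums, they lie in three \emph{distinct} factors. This immediately yields condition (i): for any triple of rows, exactly three of the $n$ columns have at most two distinct entries, so the bound $k-1=3$ holds.

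For condition (ii) I would fix three columns $i,j,k$ and invoke Proposition~\ref{factor} to obtain points $x,y,z\in\bz_n$ with $\{x,y\}\in T_i$, $\{y,z\}\in T_j$, $\{z,x\}\in T_k$; since consecutive pairs share a vertex these three points are automatically distinct. By the observation above each of columns $i,j,k$ has at most two distinct entries in rows $x,y,z$, which is the first half of (ii). For the ``further'' requirement I note that the edges $\{x,y\},\{y,z\},\{z,x\}$ lie in exactly the factors $T_i,T_j,T_k$, so any column $\ell\notin\{i,j,k\}$ contains none of these edges as part of $T_\ell$; hence $x,y,z$ fall into three different parts of $\mathcal{P}_\ell$ and column $\ell$ shows exactly $t+1=3$ distinct symbols in these rows. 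As $n-3=2q-4>0$ for $q\geq3$, such a column exists, completing (ii). Together with the upper bound $f_{2,2}(q,4)\le 2q-1$ of Proposition~\ref{f22}, this produces an optimal code.

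I expect no serious obstacle. The construction and condition (i) are routine counting in the edge/factor language, and the only genuine mathematical content of (ii)---that one triple of rows can be simultaneously degenerate in three prescribed columns---is exactly Proposition~\ref{factor}, which is already in hand. The lone remaining subtlety is to confirm that this same triple is non-degenerate in some other column, and that follows automatically once one observes that the three relevant edges are confined to the three chosen factors. The main care needed is simply to match each combinatorial clause of a $(q,4,2q-1)_{2,2}$-Armstrong code to the corresponding statement about near $1$-factors.
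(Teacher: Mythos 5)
Your proposal is correct and follows essentially the same route as the paper: the same array built from the near $1$-factorization (columns = partitions $T_i\cup\{\{i\}\}$ with symbols assigned to parts), condition (i) via uniqueness of the factor containing each edge, and condition (ii) via Proposition~\ref{factor} plus the observation that the three edges on $\{x,y,z\}$ occur in no factor other than $T_i,T_j,T_k$. Your explicit remark that the three edges have distinct vertex sums (hence lie in three distinct factors) is a slightly cleaner packaging of the counting for condition (i) than the paper's, but the substance is identical.
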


\begin{proof} Let $n=2q-1$. We construct an $n\times n$ array $C$ over $[q]$ as follows.

The columns of $C$ are indexed by $T_i$ ($i\in \bz_n$), while the
rows are indexed by the vertices of $K_n$, i.e., $\bz_n$. In each
column, say column indexed by $T_i$,  arbitrarily order the $q-1$
edges of $T_i$, assign symbols in $[q]$ to this column,  such that
for each row $s\in \bz_n\setminus \{i\}$, symbol $j$ is assigned if
and only if $s$ is incident to the $j$-th edge of $T_i$; for row
$i$, symbol $q$ is assigned to the column.

We claim that $C$ is a $(q,4,2q-1)_{2,2}$-Armstrong code. For each
three rows $x,y,z$ of $C$, we choose three columns $T_i, T_j, T_k$,
such that $\{x,y\} \in T_i$, $\{x,z\}\in T_j$ and $\{y,z\}\in T_k$.
Then these three columns have exactly two distinct symbols in rows
$x,y,z$. For any three columns $T_i, T_j, T_k$, by
Proposition~\ref{factor}, we have three rows $x,y,z$ such that
$\{x,y\} \in T_i$, $\{x,z\}\in T_j$ and $\{y,z\}\in T_k$, i.e., at
most two distinct elements in these three rows. Further, since
$\{T_i:i\in \bz_n\}$ is a near $1$-factorization, no pairs from
$\{x,y,z\}$ occur in any $T_l$ with $l\neq i,j,k$. That is for any
other column $T_l$, there are exactly three different elements in
rows $x,y,z$. Thus we prove the claim.\end{proof}

Here is an example of applying Proposition~\ref{s22o} to a near
$1$-factorization of $K_n$.

\begin{example}
For $n=7$, we can get a near $1$-factorization of $K_7$ as follows:
\[\begin{array}{l}
T_0=\{\{1,6\},\{2,5\},\{3,4\}\}, \\T_1=\{\{2,0\},\{3,6\},\{4,5\}\}, \\T_2=\{\{3,1\},\{4,0\},\{5,6\}\}, \\T_3=\{\{4,2\},\{5,1\},\{6,0\}\}, \\ T_4=\{\{5,3\},\{6,2\},\{0,1\}\}, \\T_5=\{\{6,4\},\{0,3\},\{1,2\}\}, \\T_6=\{\{0,5\},\{1,4\},\{2,3\}\}.\\
\end{array}\]
Then a $(4,4,7)_{2,2}$-Armstrong code is constructed as below.

$$\begin{array}{ccccccc}
4 & 1 & 2 & 3 & 3 & 2 & 1 \\
1 & 4 & 1 & 2 & 3 & 3 & 2 \\
2 & 1 & 4 & 1 & 2 & 3 & 3 \\
3 & 2 & 1 & 4 & 1 & 2 & 3 \\
3 & 3 & 2 & 1 & 4 & 1 & 2 \\
2 & 3 & 3 & 2 & 1 & 4 & 1 \\
1 & 2 & 3 & 3 & 2 & 1 & 4 \\
\end{array}$$
\end{example}

\vskip 10pt

Combining Propositions~\ref{f22} and~\ref{s22o}, we determine the
value of $f_{2,2}(q,4)$. \vskip 10pt

\begin{theorem}
\label{f22s} $f_{2,2}(q,4)=2q-1$ for all integers $q\geq 3$.
\end{theorem}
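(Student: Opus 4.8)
The plan is to establish the claimed value by proving matching upper and lower bounds on $f_{2,2}(q,4)$, both of which are already assembled in the preceding propositions. For the upper bound I would invoke the two counting inequalities of Proposition~\ref{condition1} specialized to $s=t=2$ and $k=4$, namely $f_{2,2}(q,4)\leq m$ and $f_{2,2}(q,4)\leq k(m)=3\binom{m}{3}/\varphi(m)$, where $\varphi(m)$ is the lower bound on $\phi$ supplied by Proposition~\ref{condition3}. The first bound is increasing in $m$, while the second is decreasing by Proposition~\ref{decreasing1}, so the strongest conclusion is obtained at the crossover point where $m=k(m)$. Solving this equation gives $m=2q-1$ (with $h=1$ and $r=q-1$), yielding $f_{2,2}(q,4)\leq 2q-1$ exactly as recorded in Proposition~\ref{f22}.

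For the lower bound I would exhibit a $(q,4,2q-1)_{2,2}$-Armstrong code, which is precisely the content of Proposition~\ref{s22o}. The construction takes the near $1$-factorization $\{T_i:i\in\bz_n\}$ of $K_n$ with $n=2q-1$, indexes columns by the factors $T_i$ and rows by $\bz_n$, and fills each column so that two rows receive equal symbols exactly when their vertices are incident to a common edge of $T_i$ (the missed vertex $i$ receiving the extra symbol $q$). Verifying the first Armstrong condition is direct: any three rows $x,y,z$ determine, via the three edges they span, at most three factors in which some pair coincides, so at most three columns see fewer than three distinct symbols.

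The genuinely substantive step—and the main obstacle—is the second Armstrong condition, i.e.\ minimality of every key, which reduces to Proposition~\ref{factor}: for any three distinct factors $T_i,T_j,T_k$ there exist vertices $x,y,z$ with $\{x,y\}\in T_i$, $\{y,z\}\in T_j$, $\{z,x\}\in T_k$. I would prove this by tracing the alternating path $P$ generated by $T_i$ and $T_j$ from $j$ to $i$, forming the auxiliary set $T$ of distance-two chords of $P$, and using the edge-sum characterization $\{x,y\}\in T_\ell\iff x+y=2\ell$ to show that the sum-set $S=\{x+y:\{x,y\}\in T\}$ equals $\bz_n\setminus\{2i,2j\}$; hence $2k\in S$ for every $k\neq i,j$, producing the required triangle. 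The near $1$-factorization property then simultaneously guarantees that no pair among $\{x,y,z\}$ lies in any other factor, so every remaining column sees three distinct symbols on these rows, forcing minimality. Combining Proposition~\ref{f22} with Proposition~\ref{s22o} then gives $f_{2,2}(q,4)=2q-1$ for all $q\geq 3$.
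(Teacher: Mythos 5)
Your proposal is correct and follows essentially the same route as the paper: the upper bound via Propositions~\ref{condition1}, \ref{condition3}, and~\ref{decreasing1} with the crossover at $m=2q-1$, and the lower bound via the near $1$-factorization construction of Proposition~\ref{s22o}, whose key step is the triangle lemma of Proposition~\ref{factor} proved by the same alternating-path and sum-set argument.
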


\vskip 10pt

\section{Lower Bounds for $f_{1,t}(q,k)$}
Since each $(q,k,n)_{1,t}$-Armstrong code is trivially a
$(q,k,n)_{s,t}$-Armstrong code, $1<s\leq t$, the problem of
estimating values of $f_{1,t}(q,k)$ seems more important. In this
section, we focus on exploring  lower bounds for $f_{s,t}(q,k)$ when
$s=1$.

\subsection{A Construction from Reed-Solomon Codes}
In this subsection, we assume that $q$ is a prime power. Let $\gf_q$
be a finite field with $q$ elements $a_1,a_2,\ldots,a_q$. For each
polynomial $f\in \gf_q[X]$, let $f_{\infty}$ denote the coefficient
of $X$ in $f$. A Reed-Solomon code $C$ over $\gf_q$ of length $q+1$
is constructed as follows.
\[C=\{(f(a_1),f(a_2),\ldots,f(a_q),f_{\infty}):f\in \gf_q[X],\deg f<k\}.\]
 \vskip 10pt

\begin{proposition}\label{rs}
The code $C$ is a $(q,k,q+1)_{1,t}$-Armstrong code for any $1\leq
t\leq q-1$. Thus $f_{1,t}(q,k)\geq q+1$ for $1\leq t\leq q-1$ and $q$ a prime power.
\end{proposition}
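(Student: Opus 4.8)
The plan is to verify the two defining properties of a $(q,k,q+1)_{1,t}$-Armstrong code directly, using the structural fact that $C$ is the doubly-extended Reed--Solomon code, which is maximum distance separable (MDS) with parameters $[q+1,k,q-k+2]$. The evaluation map $f\mapsto (f(a_1),\ldots,f(a_q),f_\infty)$ is an injective linear map on the $k$-dimensional space of polynomials of degree $<k$, so $C$ is a $[q+1,k]$ code; since it meets the Singleton bound, any two distinct codewords agree in at most $k-1$ of the $q+1$ coordinates. Throughout I treat the rows of $C$ as codewords and the columns as the $q+1$ coordinate positions, and I recall that for $s=1$ the relevant event ``at most $s$ distinct elements in a column'' means that all the chosen rows are \emph{equal} there.

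For property (i), I would take any $t+1$ distinct rows and single out two of them, say the codewords corresponding to polynomials $f_1\neq f_2$. A column in which all $t+1$ rows are equal is in particular a column in which these two codewords agree, and by the MDS bound there are at most $k-1$ such columns. Hence at most $k-1$ columns can have a single distinct element across the $t+1$ rows, which is exactly (i).

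Property (ii) is the substantive part. Fix a set $S$ of $k-1$ columns and analyze the shortened subcode $C_S=\{c\in C: c|_{S}=0\}$. Extending $S$ to an information set of size $k$ shows that the restriction map $C\to \gf_q^{S}$ is surjective with one-dimensional kernel, so $C_S=\langle g\rangle$ for a single nonzero codeword $g$. Because $g$ vanishes on the $k-1$ coordinates of $S$ and the minimum weight is $q-k+2=(q+1)-(k-1)$, the codeword $g$ must vanish \emph{exactly} on $S$ and be nonzero in every column outside $S$. Choosing any codeword $c_0$ and forming the coset $c_0+C_S=\{c_0+\lambda g:\lambda\in\gf_q\}$, every member agrees with $c_0$, hence with one another, on all columns of $S$. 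Selecting $t+1$ distinct scalars $\lambda_1,\ldots,\lambda_{t+1}\in\gf_q$ produces $t+1$ rows that are constant on each column of $S$. For any column $p\notin S$ we have $g_p\neq 0$, so the entries $(c_0)_p+\lambda_i g_p$ are pairwise distinct, and that column has exactly $t+1$ distinct elements across these rows; this establishes (ii).

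The only place requiring genuine care is the structural claim: that $C$ is MDS and that the subcode vanishing on a prescribed $(k-1)$-set is one-dimensional with its generator supported on the full complement of that set. Everything else is a short deduction. I would also emphasize that the hypothesis $t\leq q-1$ enters in exactly one spot, namely in guaranteeing that the $t+1$ distinct scalars $\lambda_i$ exist (since $t+1\leq q=|C_S|$); this is precisely what pins down the admissible range of $t$, and the advertised lower bound $f_{1,t}(q,k)\geq q+1$ then follows at once from the definition of $f_{1,t}$.
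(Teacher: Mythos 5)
Your proof is correct, and it produces exactly the same witness rows as the paper's proof, though it certifies their properties by a different and somewhat more general route. The paper argues by interpolation: for condition (i) it notes, as you do, that any $k$ coordinates determine the polynomial $f$ of degree $<k$, so two distinct codewords agree in at most $k-1$ positions; for condition (ii) it fixes $k-1$ columns $a_{i_1},\ldots,a_{i_{k-1}}$ and values $b_1,\ldots,b_{k-1}$, observes that exactly $q$ polynomials $f_l$ of degree $<k$ satisfy $f_l(a_{i_j})=b_j$ for all $j$, and then uses the pairwise-agreement bound to conclude that these $q$ rows exhibit $q$ distinct symbols in every column outside the chosen set. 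Your coset $c_0+\langle g\rangle$ is precisely this family of $q$ interpolating polynomials, so the two proofs select the same $t+1$ rows; what differs is that you derive their properties from the shortened-code and minimum-weight argument, which applies verbatim to any $[q+1,k]$ MDS code and, unlike the paper's notation, treats all $q+1$ coordinates uniformly (the paper's choice of columns tacitly avoids the coordinate indexed by $\infty$; covering that case in the interpolation language requires prescribing the leading coefficient instead of a value $b_j$). One caveat that actually validates your reading: the paper defines $f_{\infty}$ as the coefficient of $X$ in $f$, but both the proposition and either proof require it to be the coefficient of $X^{k-1}$, since with the coefficient of $X$ the extended code is not MDS (for example, over $\gf_5$ with $k=3$, adding $(X-1)(X-4)=X^2+4$ to any $f$ changes neither the values at $1$ and $4$ nor the coefficient of $X$, giving $k$ agreements); your MDS interpretation of $C$ as the doubly-extended Reed--Solomon code is the intended one. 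Finally, both proofs use the hypothesis $t\leq q-1$ in the same single place, namely to extract $t+1$ distinct rows from the $q$ available ones.
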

\begin{proof} We prove it by definition of Armstrong code. View $C$
 as a $q^{k} \times (q+1)$ array with columns indexed by $(a_1,\ldots,a_q,\infty)$
  and rows indexed by polynomials in $\gf_q[X]$. Since $\deg f<k$, any $k$
coordinates determine a unique polynomial $f$. Hence, any two
codewords agree in at most $k-1$ positions, which means that the first
condition of the definition holds. As for the second condition,
choose any $k-1$ columns, say $a_{i_1},\ldots,a_{i_{k-1}}$ and any
$k-1$ elements $b_1,\ldots,b_{k-1}$ in $\gf_q$, then there are
exactly $q$ polynomials $f_l\in \gf_q[X]$, $l\in [q]$ such that
$f_l(a_{i_j})=b_j$, $j\in [k-1]$. Since any two codewords agree at
most $k-1$ positions, then any columns outside
$\{a_{i_1},\ldots,a_{i_{k-1}}\}$ have exactly $q$
distinct elements in the $q$ rows $f_l$, $l\in [q]$.
\end{proof} \vskip 10pt

\subsection{An Existence Result Using the Probabilistic Method}

In this subsection, we will  give a lower bound for $f_{1,t}(q,k)$
by using a similar probabilistic method as in
\cite{SaliSzekely:2008}. First, we construct a random $q$-ary code
$C$ of length $n$ and size $(t+1)\cdot {n\choose {k-1}}$ as follows.

For each subset of $k-1$ positions $K\subset [n]$, choose a set of
$t+1$ codewords $A^K=\{A_1^K,A_2^K,\dots,A_{t+1}^K\}$ randomly such
that they pairwise agree exactly at the positions in $K$. That is,
in each position in $K$, a random symbol is chosen with probability
$\frac{1}{q}$ and assigned to this position for all codewords in $A^K$. In each position out of
$K$, $t+1$ distinct symbols are randomly chosen and assigned to the $t+1$
rows. The choices are pairwise independent for
distinct positions. Let $C=\cup_{K\subset [n]} A^K$. Then the choice
of $C$ makes it satisfy the second property of a
$(q,k,n)_{1,t}$-Armstrong code. Next, we will prove that $C$ also
satisfies the first property with positive probability under certain
conditions.

Consider events $v(A_i^K,A_j^L)$, where $i,j\in [t+1]$ and $K\neq L$
are $(k-1)$-subsets of coordinate positions, that the two codewords
agree in at least $k$ coordinates. Two such events $v(A_i^K,A_j^L)$
and $v(A_{i'}^{K'},A_{j'}^{L'})$ are independent if
$\{K,L\}\cap\{K',L'\}=\emptyset$. If for any two distinct
$(k-1)$-subsets $K$, $L$ and any pair $i,j\in [t+1]$, event
$v(A_i^K,A_j^L)$ doesn't happen, then $C$
 satisfies the first condition, i.e. $C$ is a
$(q,k,n)_{1,t}$-Armstrong code.

Define the dependency graph $G=(V,E)$ by $V$ being the set of events
$\{v(A_i^K,A_j^L): K\neq L, i,j\in [t+1]\}$, and $v(A_i^K,A_j^L)$
and $v(A_{i'}^{K'},A_{j'}^{L'})$ are connected by an edge if and
only if $\{K,L\}\cap\{K',L'\}\neq\emptyset$. Thus the degree of
$v(A_i^K,A_j^L)$ in the dependency graph is $2(t+1)^2{n\choose
k-1}-3(t+1)^2-1$. On the other hand,
$$\pr(v(A_i^K,A_j^L))=\sum_{l=k}^{n}{n\choose
l}(\frac{1}{q})^l(1-\frac{1}{q})^{n-l}=B(k,n,\frac{1}{q}).$$ By the
well-known Chernoff bound, \[B(k,n,\frac{1}{q})\leq
(\frac{n}{qk})^k\cdot e^{k-\frac{n}{q}},\] when $k>\frac{n}{q}$.

Now, we will apply the following famous Lov\'asz' Local Lemma to
give a lower bound for $f_{1,t}(q,k)$.

 \vskip 10pt

\begin{lemma}[\cite{AS2002}]\label{LLL}
Let $A_1,A_2,\dots,A_n$ be events in an arbitrary probability space.
Suppose that each event $A_i$ is mutually independent of a set of
all the other events $A_j$ but at most $d$, and that $\pr(A_i)\leq
p$ for all $1\leq i\leq n$. If $ep(d+1)\leq 1$, then
$\pr(\bigwedge_{i=1}^{n}\overline{A_i})>0$.
\end{lemma}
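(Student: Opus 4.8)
The plan is to prove the symmetric form stated here as a corollary of the \emph{general} (asymmetric) Lov\'asz Local Lemma, whose inductive proof is the real content. Let $G$ be a dependency graph for the events, so that each $A_i$ is mutually independent of the family $\{A_j : j\notin N(i)\cup\{i\}\}$, where $N(i)$ denotes the set of neighbors of $i$. I would first establish the general statement: if there exist reals $x_1,\dots,x_n\in[0,1)$ with
\[
\pr(A_i)\leq x_i\prod_{j\in N(i)}(1-x_j)\qquad\text{for all }i,
\]
then $\pr\!\left(\bigwedge_{i=1}^n\overline{A_i}\right)\geq\prod_{i=1}^n(1-x_i)>0$.

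The heart of the argument is the claim that for every $i$ and every index set $S\subseteq[n]\setminus\{i\}$,
\[
\pr\!\left(A_i\;\Big|\;\bigwedge_{j\in S}\overline{A_j}\right)\leq x_i,
\]
which I would prove by strong induction on $|S|$. The base case $S=\emptyset$ is just the hypothesis. For the inductive step, partition $S=S_1\cup S_2$ with $S_1=S\cap N(i)$ and $S_2=S\setminus N(i)$, and write the conditional probability as a ratio
\[
\pr\!\left(A_i\;\Big|\;\bigwedge_{j\in S}\overline{A_j}\right)
=\frac{\pr\!\left(A_i\wedge\bigwedge_{j\in S_1}\overline{A_j}\;\big|\;\bigwedge_{\ell\in S_2}\overline{A_\ell}\right)}
{\pr\!\left(\bigwedge_{j\in S_1}\overline{A_j}\;\big|\;\bigwedge_{\ell\in S_2}\overline{A_\ell}\right)}.
\]
I would bound the numerator from above by $\pr(A_i)$, dropping the $S_1$ conjuncts and then using mutual independence of $A_i$ from $\{A_\ell:\ell\in S_2\}$; and I would bound the denominator from below by expanding it through the chain rule and applying the induction hypothesis to each strictly smaller conditioning set, which yields a factor of at least $\prod_{j\in S_1}(1-x_j)$. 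Since $S_1\subseteq N(i)$, combining these with the hypothesis on $\pr(A_i)$ collapses the ratio to at most $x_i$. Once the claim holds, writing $\pr\!\left(\bigwedge_i\overline{A_i}\right)=\prod_{i=1}^n\pr\!\left(\overline{A_i}\mid\bigwedge_{j<i}\overline{A_j}\right)\geq\prod_{i=1}^n(1-x_i)$ via the chain rule finishes the general lemma.

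To recover the symmetric statement, I would set $x_i=\tfrac{1}{d+1}$ for all $i$. Since each vertex of the dependency graph has degree at most $d$, the inequality to verify reduces to
\[
p\leq\frac{1}{d+1}\left(1-\frac{1}{d+1}\right)^{d}.
\]
Using $\left(1-\tfrac{1}{d+1}\right)^{d}=\big(1+\tfrac1d\big)^{-d}>e^{-1}$, the right-hand side exceeds $\tfrac{1}{e(d+1)}$, so the hypothesis $ep(d+1)\leq1$ forces $p\leq\tfrac{1}{e(d+1)}<x_i\prod_{j\in N(i)}(1-x_j)$, verifying the general lemma's condition and hence giving $\pr\!\left(\bigwedge_i\overline{A_i}\right)>0$.

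The main obstacle is the inductive step for the conditional-probability claim: one must split the conditioning set into neighbors and non-neighbors, invoke mutual independence \emph{only} against the non-neighbor part $S_2$ to cap the numerator by $\pr(A_i)$, and then telescope the denominator so that the induction hypothesis applies to sets strictly smaller than $S$. Keeping the conditioning events consistent between numerator and denominator, and ensuring the denominator stays positive so that the ratio is well defined, is the delicate bookkeeping that makes the argument go through.
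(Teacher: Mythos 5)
Your proposal is correct: the general (asymmetric) Local Lemma via strong induction on the conditioning set, followed by the specialization $x_i=\tfrac{1}{d+1}$ and the bound $\left(1+\tfrac{1}{d}\right)^{-d}>e^{-1}$, is a complete and sound derivation of the symmetric statement. The paper itself offers no proof of this lemma --- it is quoted directly from Alon and Spencer \cite{AS2002} --- and your argument is essentially the standard proof given in that cited reference, so there is nothing in the paper to contrast it with.
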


\vskip 10pt

By Lemma~\ref{LLL}, if
\begin{equation}\label{ncine}2(t+1)^2{n\choose
k-1}B(k,n,\frac{1}{q})<\frac{1}{e},\end{equation}
 then
$$\pr(\bigcap \overline{v(A_i^K,A_j^L)})>0,$$
which means that a $(q,k,n)_{1,t}$-Armstrong code exists with
positive probability.

Assuming $n> 2k$, (\ref{ncine}) follows from
\begin{equation*}2(t+1)^2{n\choose
k}B(k,n,\frac{1}{q})<\frac{1}{e}.\end{equation*} Since ${n\choose
k}\leq (\frac{n\cdot e}{k})^k$, it is enough to show that
\begin{equation*}(\frac{n\cdot e}{k})^k (\frac{n}{qk})^k\cdot
e^{k-\frac{n}{q}}<\frac{1}{2e(t+1)^2}.
\end{equation*}
Writing $n=ck$, we have
\begin{equation}\label{nc2}\frac{c^2}{q} \cdot
e^{2-\frac{c}{q}}<\sqrt[k]{\frac{1}{2e(t+1)^2}}.
\end{equation}
It is clear that (\ref{nc2}) is true when $c\leq
\sqrt[2k]{\frac{1}{2e(t+1)^2}} \frac{\sqrt{q}}{e}$.
 \vskip 10pt

\begin{proposition}\label{probresult}
Assume that $t\geq 1$ and $q,k$ are integers satisfying
 $q> 4e^2\sqrt[k]{2e(t+1)^2}$. Then a
$(q,k,n)_{1,t}$-Armstrong code exists for  $n=
\sqrt[2k]{\frac{1}{2e(t+1)^2}} \frac{\sqrt{q}}{e} k$, i.e.,
\[f_{1,t}(q,k)\geq \sqrt[2k]{\frac{1}{2e(t+1)^2}} \frac{\sqrt{q}}{e}
k.\]
\end{proposition}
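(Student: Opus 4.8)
The plan is to apply the Lov\'asz Local Lemma (Lemma~\ref{LLL}) to the random code $C$ of size $(t+1)\binom{n}{k-1}$ already constructed, with the ``bad'' events being $v(A_i^K,A_j^L)$, and then translate the condition $ep(d+1)\leq 1$ into an explicit lower bound on $n$. By construction, $C$ satisfies property (ii) of a $(q,k,n)_{1,t}$-Armstrong code automatically; avoiding all the events $v(A_i^K,A_j^L)$ guarantees property (i). So it suffices to show these events can be simultaneously avoided with positive probability for the claimed value of $n$.

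First I would collect the two ingredients feeding into the local lemma: the maximum degree $d=2(t+1)^2\binom{n}{k-1}-3(t+1)^2-1$ of the dependency graph, and the uniform probability bound $p=B(k,n,\tfrac1q)$, both of which are derived in the preceding text. Since $d+1\leq 2(t+1)^2\binom{n}{k-1}$, the hypothesis $ep(d+1)\leq 1$ of Lemma~\ref{LLL} is implied by inequality~(\ref{ncine}), namely $2(t+1)^2\binom{n}{k-1}B(k,n,\tfrac1q)<\tfrac1e$. Thus the entire argument reduces to verifying~(\ref{ncine}) for the stated $n$.

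Next I would run the chain of sufficient conditions already laid out in the excerpt. Assuming $n>2k$, I replace $\binom{n}{k-1}$ by $\binom{n}{k}$ (a valid upper bound in this range), then use the estimate $\binom{n}{k}\leq (ne/k)^k$ and the Chernoff bound $B(k,n,\tfrac1q)\leq (\tfrac{n}{qk})^k e^{k-n/q}$, which is legitimate since the condition $k>n/q$ will hold (indeed $n=\Theta(\sqrt q\,k)$ forces $n/q=\Theta(k/\sqrt q)<k$ for $q$ large). Substituting $n=ck$ collapses everything to inequality~(\ref{nc2}): $\tfrac{c^2}{q}e^{2-c/q}<\sqrt[k]{1/(2e(t+1)^2)}$. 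The final step is to check that the choice $c=\sqrt[2k]{1/(2e(t+1)^2)}\,\tfrac{\sqrt q}{e}$ satisfies~(\ref{nc2}): with this $c$ one has $c^2/q=\sqrt[k]{1/(2e(t+1)^2)}\,e^{-2}$, so the left side of~(\ref{nc2}) equals $\sqrt[k]{1/(2e(t+1)^2)}\,e^{-c/q}$, which is strictly less than the right side because $e^{-c/q}<1$. Setting $n=ck$ then yields the asserted bound $f_{1,t}(q,k)\geq \sqrt[2k]{1/(2e(t+1)^2)}\,\tfrac{\sqrt q}{e}\,k$.

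The main obstacle is bookkeeping rather than conceptual: I must ensure all the auxiliary inequalities used along the way are genuinely valid, in particular that $n>2k$ and $k>n/q$ hold for the claimed $n$, and that these are exactly what the hypothesis $q>4e^2\sqrt[k]{2e(t+1)^2}$ secures. The condition on $q$ should force $c>2$ (so that $n=ck>2k$) while the growth rate $c=O(\sqrt q)$ keeps $n/q<k$; I would verify the former by noting $c>2$ is equivalent to $\sqrt q>2e\sqrt[2k]{2e(t+1)^2}$, i.e. $q>4e^2\sqrt[k]{2e(t+1)^2}$, which is precisely the stated hypothesis. Care is also needed because $n$ should be an integer, so strictly one takes $n=\lfloor ck\rfloor$ and checks the strict inequality~(\ref{nc2}) leaves enough slack; since~(\ref{nc2}) is strict and $e^{-c/q}<1$ provides a definite gap, rounding down $c$ only strengthens it.
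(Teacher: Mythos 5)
Your proposal is correct and follows essentially the same route as the paper's proof: both apply the Lov\'asz Local Lemma through inequality~(\ref{ncine}), reduce it via the $\binom{n}{k}\leq (ne/k)^k$ and Chernoff estimates to~(\ref{nc2}), and observe that the hypothesis $q>4e^2\sqrt[k]{2e(t+1)^2}$ is precisely what guarantees $n>2k$ and $qk>n$ for the claimed $c$. The paper states this chain more tersely (its proof is essentially one sentence pointing back to the preceding analysis), while you additionally spell out the verification of~(\ref{nc2}) at $c=\sqrt[2k]{1/(2e(t+1)^2)}\,\sqrt{q}/e$ and the integer-rounding of $n$, both of which are sound refinements rather than deviations.
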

\begin{proof} The condition $q> 4e^2\sqrt[k]{2e(t+1)^2}$ implies
that $n> 2k$ and $qk>n$, which completes the proof by combining
above analysis.
\end{proof} \vskip 10pt

\section{Conclusion}

We investigated the maximum number of minimal keys in relational
database systems with attributes having bounded domains via
the study of Armstrong codes. We showed that the maximum
length $n$ for which a $(q,3,n)$-Armstrong code can exist
is $f(q,3)=3q-1$ for all $q\geq
5$ with three possible exceptions, disproving a conjecture of Sali.

Our determination of $f(q,3)$ involves introducing the new concept
of extorthogonal double covers (extODC), a generalization of
orthogonal double covers with property that any two partitions cover
at least one common $2$-subset. This new combinatorial design is
interesting not only in database theory, but also in design theory.
Similar to ODCs, there are several directions for the study of
extODCs. For example, each partition could be extended to any
spanning subgraph, or consider similar properties for hypergraphs.

Further, we generalized Armstrong codes to the case of
$(s,t)$-dependencies. The maximum length $n=f_{s,t}(q,k)$ for which
a $(q,k,n)_{s,t}$-Armstrong code can exist seems to be quite
difficult to determine. Classes of optimal Armstrong codes of this
type are constructed. Several lower bounds of $f_{s,t}(q,k)$ are
also established.

%
\section{Acknowledgement}
The authors would like to thank Professor Tuvi Etzion for pointing
out the construction from Reed-Solomon codes. The authors are also grateful to the anonymous referees and
Professor Dr. Navin Kashyap for their invaluable and
constructive comments and suggestions which have greatly
improved the presentation of this paper and make
this paper more readable.



\begin{IEEEbiographynophoto}{Yeow Meng Chee}
(SM'08) received the B.Math. degree in computer science
and combinatorics and optimization and the M.Math. and Ph.D. degrees in
computer science from the University of Waterloo, Waterloo, ON, Canada, in
1988, 1989, and 1996, respectively.

Currently, he is a Professor at the Division of Mathematical Sciences,
School of Physical and Mathematical Sciences, Nanyang Technological University,
Singapore. Prior to this, he was Program Director of Interactive Digital
Media R\&D in the Media Development Authority of Singapore, Postdoctoral
Fellow at the University of Waterloo and IBM¡¯s Z\"urich Research Laboratory,
General Manager of the Singapore Computer Emergency Response Team,
and Deputy Director of Strategic Programs at the Infocomm Development
Authority, Singapore.

His research interest lies in the interplay between combinatorics and computer
science/engineering, particularly combinatorial design theory, coding
theory, extremal set systems, and electronic design automation.
\end{IEEEbiographynophoto}

\begin{IEEEbiographynophoto}{Hui Zhang}
 received the Ph.D. degree in Applied Mathematics from Zhejiang University, Hangzhou, Zhejiang, P. R. China in 2013. Currently, she is working as a Research Fellow in Nanyang Technological University in Singapore. Her research interests include combinatorial theory, coding theory and cryptography, and their intersections.

\end{IEEEbiographynophoto}

\begin{IEEEbiographynophoto}{Xiande Zhang}
received the Ph.D. degree in mathematics from Zhejiang University, Hangzhou, Zhejiang, P. R. China in 2009. After that, she held postdoctoral positions in Nanyang Technological University and Monash University. Currently, she is a Research Fellow at the Division of Mathematical Sciences, School of Physical and Mathematical Sciences, Nanyang Technological University, Singapore.  Her research interests include combinatorial design theory, coding theory, cryptography, and their interactions.
\end{IEEEbiographynophoto}
\end{document}